\numberwithin{equation}{section}
\numberwithin{figure}{section}
\theoremstyle{plain}
\newtheorem{theorem}{Theorem}[section]
\newtheorem{proposition}[theorem]{Proposition}
\theoremstyle{definition}
\newtheorem{assumption}[theorem]{Assumption}
\newtheorem{assumptions}[theorem]{Assumptions}
\newtheorem*{definition*}{Definition}
\newtheorem*{notation*}{Notation}
\newtheorem*{notations*}{Notations}
\newtheorem*{rh-pb*}{Basic RH problem}
\theoremstyle{remark}
\newtheorem{remark}[theorem]{Remark}
\newtheorem*{note*}{Note}
\providecommand{\D}[1]{\mathbb{#1}}
\providecommand{\C}[1]{\mathcal{#1}}
\newcommand{\dd}{\mathrm{d}}
\newcommand{\eul}{\mathrm{e}}
\newcommand{\ii}{\mathrm{i}}
\providecommand{\abs}[1]{\lvert#1\rvert}
\providecommand{\accol}[1]{\lbrace#1\rbrace}
\renewcommand{\Im}{\operatorname{Im}}
\renewcommand{\Re}{\operatorname{Re}}
\newcommand{\err}{\mathrm{err}}
\newcommand{\model}{\mathrm{mod}}
\newcommand{\ord}{\mathrm{O}}
\newcommand{\bc}{\mathrm{BC}}
\newcommand{\const}{\mathrm{const}}
\newcommand{\sol}{\mathrm{sol}}
\title[Focusing NNLS equation with asymmetric boundary conditions]{Focusing nonlocal nonlinear Schr\"odinger equation with asymmetric boundary conditions: large-time behavior}
\author[A.~Boutet de Monvel]{Anne Boutet de Monvel}
\address{AB: Institut de Math\'ematiques de Jussieu-Paris Rive Gauche, Universit\'e de Paris, 8 place Aur\'elie Nemours, 75205 Paris Cedex 13, France}
\email{{\tt anne.boutet-de-monvel@imj-prg.fr}}
\author[Y.~Rybalko]{Yan Rybalko}
\address{YR: B.~Verkin Institute for Low Temperature Physics and Engineering of the National Academy of Sciences of Ukraine, 47 Nauky Avenue, 61103 Kharkiv, Ukraine}
\email{{\tt rybalkoyan@gmail.com}}
\author[D.~Shepelsky]{Dmitry Shepelsky}
\address{DS: B.~Verkin Institute for Low Temperature Physics and Engineering of the National Academy of Sciences of Ukraine, 47 Nauky Avenue, 61103 Kharkiv, Ukraine}
\email{{\tt shepelsky@yahoo.com}}
\keywords{nonlocal nonlinear Schr\"odinger equation, Riemann--Hilbert problem, large-time asymptotics}
\subjclass[2010]{Primary: 35Q53; Secondary: 37K15, 35Q15, 35B40, 35Q51, 37K40}
\begin{document}
%---------------------------------------------------------%
\begin{abstract}
We consider the focusing integrable nonlocal nonlinear Schr\"odinger equation
\[
\ii q_{t}(x,t)+q_{xx}(x,t)+2q^{2}(x,t)\bar{q}(-x,t)=0
\]
with asymmetric nonzero boundary conditions: $q(x,t)\to \pm A\eul^{-2\ii A^2t}$ as $x\to\pm\infty$, where $A>0$ is an arbitrary constant. The goal of this work is to study the asymptotics of the solution of the initial value problem for this equation as $t\to+\infty$. For a class of initial values we show that there exist three qualitatively different asymptotic zones in the $(x,t)$ plane. Namely, there are regions where the parameters are modulated (being dependent on the ratio $x/t$) and a central region, where the parameters are unmodulated. This asymptotic picture is reminiscent of that for the defocusing classical nonlinear Schr\"odinger equation, but with some important differences. In particular, the absolute value of the solution in all three regions depends on details of the initial data.
\end{abstract}
%---------------------------------------------------------%
\maketitle
%---------------------------------------------------------%
%:s.1
%---------------------------------------------------------%
\section{Introduction}\label{sec:1}
In the present paper we consider the initial value problem for the focusing nonlocal nonlinear Schr\"odinger (NNLS) equation (we denote a complex conjugate of $q$ by $\bar q$)
\begin{subequations}\label{fasivp}
\begin{alignat}{2}\label{fasivp-a}
& \ii q_{t}(x,t)+q_{xx}(x,t)+2q^{2}(x,t)\bar{q}(-x,t)=0,&\qquad& x\in\D{R},
\quad t\in\D{R},  \\
\label{fasivp-b}
& q(x,0)=q_0(x), &&  x\in\D{R}, 
\end{alignat}
with asymmetric nonzero boundary conditions:
\begin{equation}\label{fasivp-c}
q(x,t) \to \pm A\eul^{-2\ii A^2t},\qquad x\to\pm\infty,\quad t\in\D{R},
\end{equation}
\end{subequations}
for some $A>0$.

%---------------------------------------------------------%
\subsubsection*{The NNLS equation}
The integrable NNLS equation was obtained by M. Ablowitz and Z. Musslimani as a nonlocal reduction of the Ablowitz-Kaup-Newell-Segur system \cite{AMP}. This equation satisfies the $\C{PT}$-symmetric condition \cite{BHH}, i.e., $q(x,t)$ and $\overline{q(-x,-t)}$ are its solutions simultaneously. Thus the NNLS equation is related to the non-Hermitian quantum mechanics \cites{BB,EMK18}. Also this equation has connections with the theory of magnetism, because it is gauge equivalent to the complex Landau-Lifshitz equation \cites{GA,R21}. Finally, the NNLS equation is an example of a two-place (Alice-Bob) system \cites{Lou18,LH17}, which involves the values of the solution at not neighboring points, $x$ and $-x$.

The NNLS equation admits exact solutions with distinctive properties. It has both bright and dark soliton solutions \cite{SMMC}, in contrast to its local counterpart, the classical nonlinear Schr\"odinger (NLS) equation. The simplest one-soliton solution of \eqref{fasivp-a} on zero background has, in general, periodic (in time) point singularities \cite{AMP}, so the solution becomes unbounded at these points. Different types of exact solutions with various backgrounds can have such isolated blow-up points in the $(x,t)$ plane. For example, solitons with nonzero boundary conditions \cites{ALM18, AMFL, GP, HL16, LX15, RSs}, rogue waves \cite{YY20} and breathers \cite{San18}. Other important exact solutions of the NNLS equation are given in, e.g., \cites{MS20, MS18, XCLM19}.

%---------------------------------------------------------%
\subsubsection*{Initial value problems}
The initial value problem \eqref{fasivp-a}-\eqref{fasivp-b} with nonzero background $q(x,t)\to A\eul^{\ii\theta_\pm(t)}$, as $x\to\pm\infty$ was firstly considered in \cite{ALM18}. It was shown that $\eul^{\ii\theta_\pm(t)}$ remains bounded as $\abs{t}\to\infty$ only in two cases: $\theta_+(t)-\theta_-(t)=0$ or $\theta_+(t)-\theta_-(t)=\pi$. Thus bounded (with respect to $t$) boundary conditions can be either $q(x,t)\to A\eul^{2\ii A^2t}$ as $\abs{x}\to\infty$ or $q(x,t)\to\pm A\eul^{-2\ii A^2t}$ as $x\to\pm\infty$. The inverse scattering transform method for problems with these two boundary values was developed in \cite{ALM18}, where it was shown that the two problems have different continuous spectra. Namely, if $q(x,t)\to A\eul^{2\ii A^2t}$ as $\abs{x}\to\infty$, the continuous spectrum consists of the real line and a vertical band $(-\ii A,\ii A)$, which is reminiscent of the problem for the classical (local) focusing NLS equation on a symmetric \cite{BK14} or step-like \cite{BKS11} background. For $q(x,t)\to\pm A\eul^{-2\ii A^2t}$, $x\to\pm\infty$, the continuous spectrum lies on the real line and has a gap $(-A, A)$, as in the problem for the defocusing NLS equation with symmetric nonzero boundary conditions \cites{DPMV13, IU88, ZS73}. Another interesting feature of problem \eqref{fasivp} is that the boundary functions $\pm A\eul^{-2\ii A^2t}$ are not exact solutions of the NNLS equation. It is in sharp contrast with the local problems, where for the well-posedness it is necessary that the boundary conditions satisfy the equation.

%---------------------------------------------------------%
\subsubsection*{Long-time asymptotics}
The long-time asymptotics for the defocusing NLS equation with nonzero boundary conditions manifests important nonlinear phenomena, including solitons \cites{CJ16, V02, ZS73}, rarefaction waves, shock waves, and various plane wave type regions \cites{B89, EGGK, FLQ, IU86, J15}. These developments motivate us to study the asymptotics of problem \eqref{fasivp} and to highlight its qualitative differences with that for the defocusing NLS equation on a nonzero background, which has a similar spectral picture. We also compare the long-time asymptotic behavior of \eqref{fasivp} to that for the Cauchy problem for \eqref{fasivp-a} with boundary conditions $q(x,t)\to A\eul^{2\ii A^2t}$ as $x\to\pm\infty$, which is considered in \cite{RS21PD}.

%---------------------------------------------------------%
\subsubsection*{Methods}
The main technical tool used in this paper is the inverse scattering transform method, which allows us to express the solution of \eqref{fasivp} in terms of the solution of an associated Riemann--Hilbert problem. The jump matrix of this problem depends on the parameters $(x,t)$ only via oscillating exponents, so we can apply the Deift and Zhou nonlinear steepest descent method \cites{DZ, DIZ} (see also \cites{DVZ94, DVZ97} for its extensions) to get the asymptotics of the Riemann--Hilbert problem and, therefore, of the solution $q(x,t)$ of \eqref{fasivp}.

%---------------------------------------------------------%
\subsubsection*{Organization of the paper}
The article is organized as follows. In Section \ref{fasist} we develop the inverse scattering transform method for \eqref{fasivp} and formulate the basic Riemann--Hilbert problem. We also get the one-soliton solution by using the Riemann--Hilbert approach. Section \ref{faslta} contains our main results, Theorems \ref{fasth1pw} and \ref{fasth2cpr}, on the long-time asymptotic behavior of $q(x,t)$. More precisely, we present the asymptotics in the ``modulated regions'' ($|x/4t|>A/2$) in Theorem \ref{fasth1pw}, and in the central ``unmodulated region'' ($0<|x/4t|<A/2$) in Theorem \ref{fasth2cpr}. Finally, we discuss the transition inside the unmodulated region as $\xi\to 0$. Theorem \ref{fasthtrans} presents the large time asymptotics with $x$ fixed $\neq 0$, in which case $\xi\to 0$.

%---------------------------------------------------------%
%:s.2
%---------------------------------------------------------%
\section{Inverse scattering transform method}\label{fasist}

The inverse scattering transform formalism for problem \eqref{fasivp} was first developed in \cite{ALM18}. Here we perform the direct and inverse analysis in a different way, in particular we define the inverse transform in terms of an associated Riemann--Hilbert problem formulated in the complex plane of the spectral parameter $k$ entering the standard Lax pair equations for the NNLS equation \eqref{fasivp-a}.
%---------------------------------------------------------%
%:s.2.1
%---------------------------------------------------------%
\subsection{Direct scattering}

The NNLS equation \eqref{fasivp-a} is the compatibility condition of the following system of linear equations \cite{AMP} (the ``Lax pair'')
\begin{subequations}\label{fasLP}
\begin{align}
\label{fasLPa}
\Phi_{x}+\ii k\sigma_{3}\Phi&=U\Phi,\\
\label{fasLPb}
\Phi_{t}+2\ii k^{2}\sigma_{3}\Phi&=V\Phi,
\end{align}
\end{subequations}
where $\sigma_3=\left(\begin{smallmatrix} 1& 0\\ 0 & -1\end{smallmatrix}\right)$ is the third Pauli matrix, $\Phi(x,t,k)$ is a $2\times2$ matrix-valued function, $k\in\D{C}$ is the spectral parameter, and $U(x,t)$ and $V(x,t,k)$ are given in terms of $q(x,t)$ as follows:	
\begin{equation}
U(x,t)=\begin{pmatrix}
0& q(x,t)\\
-\bar{q}(-x,t)& 0\\
\end{pmatrix},\qquad 
V(x,t,k)=\begin{pmatrix}
V_{11}(x,t)& V_{12}(x,t,k)\\
V_{21}(x,t,k)& V_{22}(x,t)\\
\end{pmatrix},
\end{equation}
where $V_{11}=-V_{22}=\ii q(x,t)\bar{q}(-x,t)$, $V_{12}=2kq(x,t)+\ii q_x(x,t)$, and $V_{21}=-2k\bar{q}(-x,t)+\ii\bar{q}(-x,t)_x$. 

Assuming that 
\[
\int_{-\infty}^{0}|q(x,t)+A\eul^{-2\ii A^2t}|\,\dd x<\infty\quad\text{and}\quad\int_{0}^{\infty}|q(x,t)-A\eul^{-2\ii A^2t}|\,\dd x<\infty\quad\text{for all }t\geq 0,
\]
we introduce the $2\times2$ matrix valued functions $\Psi_j(x,t,k)$, $j=1,2$ as the solutions of the following linear Volterra integral equations ($j=1,2$)
\begin{align}\label{fasPsi}
\notag
&\Psi_j(x,t,k)=\eul^{-\ii A^2t\sigma_3}\C{E}_j(k)\\
&\qquad+\int\limits_{(-1)^j\infty}^{x}G_j(x,y,t,k)
(U(y,t)-U_j(t))\Psi_j(y,t,k)\eul^{\ii (x-y)f(k)\sigma_3}\,\dd y,
\quad k\in\D{R}\setminus[-A,A].
\end{align}
Here $U_1(t)$ and $U_2(t)$ are the limits of $U(x,t)$ as $x\to\mp\infty$:
\begin{equation}
U(x,t)\to U_j(t),\quad x\to(-1)^j\infty,
\end{equation}
where
\begin{equation}
U_1(t)=
\begin{pmatrix}
0& -A\eul^{-2\ii A^2t}\\
-A\eul^{2\ii A^2t} & 0
\end{pmatrix}
\quad\text{and}\quad
U_2(t)=
\begin{pmatrix}
0& A\eul^{-2\ii A^2t}\\
A\eul^{2\ii A^2t} & 0
\end{pmatrix}.
\end{equation}
The kernels $G_j(x,y,t,k)$, $j=1,2$ are defined in terms of functions $\C{E}_j(k)$, $j=1,2$ and $f(k)$ as follows:
\begin{equation}
G_j(x,y,t,k)=\eul^{-\ii A^2t\sigma_3}\C{E}_j(k)
\eul^{-\ii (x-y)f(k)\sigma_3}\C{E}_j^{-1}(k)
\eul^{\ii A^2t\sigma_3},
\end{equation}
where
\begin{equation}\label{fasK}
\C{E}_j(k)\coloneqq\frac{1}{2}
\begin{pmatrix}
w(k)+\frac{1}{w(k)} &
(-1)^j\,\ii\,\left(w(k)-\frac{1}{w(k)}\right)\\
(-1)^{j+1}\,\ii\,\left(w(k)-\frac{1}{w(k)}\right) & w(k)+\frac{1}{w(k)}
\end{pmatrix},\quad w(k)\coloneqq\left(\frac{k-A}{k+A}\right)^{\frac{1}{4}},
\end{equation}
and
\begin{equation}\label{fasf}
f(k)\coloneqq(k^2-A^2)^{\frac{1}{2}}.
\end{equation}
Here, the functions $f(k)$ and $w(k)$ are defined for $k\in\D{C}\setminus [-A,A]$ as the branches fixed by the large $k$ asymptotics:
\begin{equation}\label{fasbrcut}
f(k)=k+\ord(k^{-1})\quad\text{and}\quad
w(k)=1+\ord(k^{-1}),\quad k\to\infty.
\end{equation}

We denote by $f_\pm(k)$ and $w_\pm(k)$ the limiting values of the corresponding function as $k$ approaches $(-A,A)$ (oriented from $-A$ to $A$) from the left/right side (and similarly for $\C{E}_{j\pm}(k)$). In particular, $f_+(k)=\ii\sqrt{A^2-k^2}$ for $k\in(-A,A)$, with $\sqrt{A^2-k^2}>0$. Observe that $G(x,y,t,k)$ is entire with respect to $k$ for all $x$, $y$, and $t$.

Since $f(k)$ is real for $k\in\D{R}\setminus[-A,A]$, the integral in \eqref{fasPsi} converges for such $k$. Let $Q^{[i]}$ denote the $i$-th column of a matrix $Q$, $\D{C}^{\pm}\coloneqq\accol{k\in\D{C}\mid\pm\Im k>0}$, and $\overline{\D{C}^{\pm}}\coloneqq\accol{k\in\D{C}\mid\pm\Im k\geq 0}$. Then we can define $\Psi_j^{[j]}(x,t,k)$, $j=1,2$, and $\Psi_1^{[2]}(x,t,k)$, $\Psi_2^{[1]}(x,t,k)$ on the cut $(-A, A)$ as the limiting values from $\D{C}^+$ and $\D{C}^-$, respectively:
\begin{align}\label{fasPsiC+}
\notag
&\Psi_{j+}^{[j]}(x,t,k)
=\eul^{-\ii A^2t\sigma_3}\C{E}_{j+}^{[j]}(k)\\
&+\int\limits_{(-1)^j\infty}^{x}G_j(x,y,t,k)
(U(y,t)-U_j(t))\Psi_{j+}^{[j]}(y,t,k)
\eul^{(-1)^{j+1}\ii (x-y)f_+(k)}\,\dd y,\quad k\in(-A,A),
\end{align}
and
\begin{subequations}\label{fasPsiC-}
\begin{align}
\notag
&\Psi_{1-}^{[2]}(x,t,k)
	=\eul^{-\ii A^2t\sigma_3}\C{E}_{1-}^{[2]}(k)\\
&\qquad+\int\limits_{-\infty}^{x}G_1(x,y,t,k)(U(y,t)-U_1(t))
	\Psi_{1-}^{[2]}(y,t,k)\eul^{-\ii (x-y)f_-(k)}\,\dd y,\quad k\in(-A,A),\\
\notag
&\Psi_{2-}^{[1]}(x,t,k)=\eul^{-\ii A^2t\sigma_3}\C{E}_{2-}^{[1]}(k)\\
&\qquad+\int\limits_{+\infty}^{x}G_2(x,y,t,k)(U(y,t)-U_2(t))
\Psi_{2-}^{[1]}(y,t,k)\eul^{\ii (x-y)f_-(k)}\,\dd y,\quad k\in(-A,A).
\end{align}
\end{subequations}
Moreover, when the solution $q(x,t)$ converges exponentially fast to its boundary values, we can define $\Psi_{j-}^{[j]}(x,t,k)$, $j=1,2$, and $\Psi_{1+}^{[2]}(x,t,k)$, $\Psi_{2+}^{[1]}(x,t,k)$ for $k\in(-A,A)$ by integral equations similar to \eqref{fasPsiC+} and \eqref{fasPsiC-}, respectively.

%-------------------%
%:prop 2.1
%-------------------%
\begin{proposition}[properties of $\Psi_j$]\label{fasproppsi1}
$\Psi_1(x,t,k)$ and $\Psi_2(x,t,k)$ have the following properties.

\emph{(i)}
The columns $\Psi_1^{[1]}(x,t,k)$ and $\Psi_2^{[2]}(x,t,k)$ are analytic for $k\in\D{C}^+$ and continuous for $k\in\overline{\D{C}^+}\setminus\{\pm A\}$, where $\Psi_j^{[j]}(x,t,k)$ is identified with $\Psi_{j+}^{[j]}(x,t,k)$, $j=1,2$ for $k\in(-A,A)$.

$\Psi_1^{[1]}(x,t,k)$ and $\Psi_2^{[2]}(x,t,k)$ have the following behaviors at $k=\infty$ and $k=\pm A$:
\begin{alignat*}{4}
&\Psi_1^{[1]}(x,t,k)=\eul^{-\ii A^2t}
\begin{pmatrix}
1\\
0\end{pmatrix}
+\ord(k^{-1}),&\quad&\Psi_2^{[2]}(x,t,k)=\eul^{\ii A^2t}
\begin{pmatrix}
0\\
1\end{pmatrix}
+\ord(k^{-1}),&\quad&k\to\infty,&\quad&k\in\D{C}^+,\\
&\Psi_1^{[1]}(x,t,k)=\ord\bigl((k\mp A)^{-\frac{1}{4}}\bigr),
&&\Psi_2^{[2]}(x,t,k)=\ord\bigl((k\mp A)^{-\frac{1}{4}}\bigr),&&k\to\pm A,&&k\in\D{C}^+.
\end{alignat*}

\emph{(ii)}
The columns $\Psi_1^{[2]}(x,t,k)$ and $\Psi_2^{[1]}(x,t,k)$ are analytic for $k\in\D{C}^-$ and continuous for $k\in\overline{\D{C}^-}\setminus\{\pm A\}$, where $\Psi_1^{[2]}(x,t,k)$ and $\Psi_2^{[1]}(x,t,k)$ are identified with $\Psi_{1-}^{[2]}(x,t,k)$ and $\Psi_{2-}^{[1]}(x,t,k)$ for $k\in(-A,A)$.

$\Psi_1^{[2]}(x,t,k)$ and $\Psi_2^{[1]}(x,t,k)$ have the following behaviors at $k=\infty$ and $k=\pm A$:
\begin{alignat*}{4}
&\Psi_1^{[2]}(x,t,k)=\eul^{\ii A^2t}
\begin{pmatrix}
0\\
1\end{pmatrix}
+\ord(k^{-1}),&\quad&\Psi_2^{[1]}(x,t,k)=\eul^{-\ii A^2t}
\begin{pmatrix}
1\\
0\end{pmatrix}
+\ord(k^{-1}),&\quad&k\to\infty,&\quad&k\in\D{C}^-,\\
&\Psi_1^{[2]}(x,t,k)=\ord\bigl((k\mp A)^{-\frac{1}{4}}\bigr),&&\Psi_2^{[1]}(x,t,k)=\ord\bigl((k\mp A)^{-\frac{1}{4}}\bigr),&&k\to\pm A,&&k\in\D{C}^-.
\end{alignat*}

\emph{(iii)}
The functions $\Phi_j(x,t,k)$, $j=1,2$ defined by
\begin{align}\label{fsjost}
\Phi_j(x,t,k)\coloneqq\Psi_j(x,t,k)\eul^{-(\ii x+2\ii tk)f(k)\sigma_3},\quad k\in\D{R}\setminus [-A,A],
\end{align}
are the (Jost) solutions of the Lax pair \eqref{fasLP} satisfying the boundary conditions
\begin{align}
\Phi_j(x,t,k)-\Phi_j^{\bc}(x,t,k)\to 0,\quad x\to (-1)^j\infty,\quad k\in\D{R}\setminus [-A,A],
\end{align}
where $\Phi_j^{\bc}(x,t,k)\coloneqq\eul^{-\ii A^2t\sigma_3}
\C{E}_j(k)\eul^{-(\ii x+2\ii tk)f(k)\sigma_3}$.
		
\emph{(iv)}
$\det\Psi_j(x,t,k)\equiv 1$ for $k\in\D{R}\setminus[-A,A]$.
		
\emph{(v)}
The  following symmetry relations hold:
\begin{subequations}\label{fssymmpsi}
\begin{equation}\label{fassymmR}
\begin{split}
\sigma_1\overline{\Psi_1^{[1]}(-x,t,-\bar{k})}&=
\Psi_2^{[2]}(x,t,k),\quad k\in\overline{\D{C}^+}\setminus[-A,A],\\
\sigma_1\overline{\Psi_{1+}^{[1]}(-x,t,-k)}&=
\Psi_{2+}^{[2]}(x,t,k),\quad k\in(-A,A),\\
\sigma_1\overline{\Psi_1^{[2]}(-x,t,-\bar{k})}&=
\Psi_2^{[1]}(x,t,k),\quad k\in\overline{\D{C}^-}\setminus[-A,A],\\
\sigma_1\overline{\Psi_{1-}^{[2]}(-x,t,-k)}&=
\Psi_{2-}^{[1]}(x,t,k),\quad k\in (-A,A),
\end{split}
\end{equation}
and 
\begin{equation}\label{fassymmseg}
\Psi_{1+}^{[1]}(x,t,k)=-\Psi_{1-}^{[2]}(x,t,k),\quad
\Psi_{2+}^{[2]}(x,t,k)=-\Psi_{2-}^{[1]}(x,t,k),\quad k\in(-A,A),
\end{equation}
\end{subequations}
where $\sigma_1=\bigl(\begin{smallmatrix}0& 1\\1 & 0\end{smallmatrix}\bigl)$ is the first Pauli matrix.

Moreover, when $\Psi_{j-}^{[j]}(x,t,k)$, $j=1,2$ and $\Psi_{1+}^{[2]}(x,t,k)$, $\Psi_{2+}^{[1]}(x,t,k)$ exist (e.g., when $q(x,t)$ converges exponentially fast to its boundary values), they satisfy the following conditions:
\begin{equation}
\label{fassymmsegadd}
\Psi_{1-}^{[1]}(x,t,k)=\Psi_{1+}^{[2]}(x,t,k),\quad
\Psi_{2-}^{[2]}(x,t,k)=\Psi_{2+}^{[1]}(x,t,k),\quad k\in(-A,A).
\end{equation}
\end{proposition}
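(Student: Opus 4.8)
Every assertion is extracted from the Volterra equation \eqref{fasPsi} by a Neumann-series (successive-approximation) argument, so the plan is to set that up once and then read off (i)--(v). I begin with the analyticity in (i)--(ii). Taking the $j$-th column of \eqref{fasPsi} and inserting $G_j=\eul^{-\ii A^2t\sigma_3}\C{E}_j\eul^{-\ii(x-y)f\sigma_3}\C{E}_j^{-1}\eul^{\ii A^2t\sigma_3}$, the diagonal exponential of $G_j$ combines with the scalar factor $\eul^{(-1)^{j+1}\ii(x-y)f}$ (the $j$-th column of $\eul^{\ii(x-y)f\sigma_3}$) to leave, sandwiched between the two copies of $\C{E}_j$, the matrix $\mathrm{diag}(1,\eul^{2\ii(x-y)f})$ for $j=1$ and $\mathrm{diag}(\eul^{-2\ii(x-y)f},1)$ for $j=2$. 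On the range of integration $(-1)^j(x-y)>0$, so the nonconstant entry is bounded precisely when $\Im f(k)\geq0$; since $f(k)=(k^2-A^2)^{1/2}\sim k$ maps $\D{C}^+$ into the upper half-plane (and $f_+=\ii\sqrt{A^2-k^2}$ on the cut has $\Im f_+>0$), this holds for $k\in\overline{\D{C}^+}$. Combined with the hypothesis $U-U_j\in L^1$, the usual Volterra bound $C^n\|U-U_j\|_{L^1}^n/n!$ on the $n$-th iterate makes the series converge absolutely and uniformly on compacts; each term is analytic, so $\Psi_1^{[1]},\Psi_2^{[2]}$ are analytic in $\D{C}^+$ and continuous on $\overline{\D{C}^+}\setminus\{\pm A\}$. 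The conjugate entry gives (ii) in $\D{C}^-$.

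\textbf{Behaviour at $\infty$ and $\pm A$.} As $k\to\infty$, $w(k)=1+\ord(k^{-1})$ yields $\C{E}_j(k)=I+\ord(k^{-1})$, so the leading term of $\Psi_1^{[1]}$ (resp.\ $\Psi_2^{[2]}$) is $\eul^{-\ii A^2t}(1,0)^{\mathrm{T}}$ (resp.\ $\eul^{\ii A^2t}(0,1)^{\mathrm{T}}$); both the $\C{E}_j$-deviation and the first Neumann term contribute only $\ord(k^{-1})$, the latter by one integration by parts in the oscillatory factor $\eul^{2\ii(x-y)f}$ with $f\sim k$. At $k=\pm A$ the sole singularity comes from $w(k)=((k-A)/(k+A))^{1/4}\sim(k\mp A)^{\mp1/4}$, whence $\C{E}_j=\ord((k\mp A)^{-1/4})$; since $f(\pm A)=0$ the exponentials stay bounded there and the Neumann correction does not worsen the exponent, giving the stated $\ord((k\mp A)^{-1/4})$.

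\textbf{Jost property and determinant.} For (iii), differentiating \eqref{fasPsi} shows $\Phi_j=\Psi_j\eul^{-(\ii x+2\ii tk)f\sigma_3}$ solves the $x$-part of \eqref{fasLP}, and the vanishing of the tail integral as $x\to(-1)^j\infty$ gives $\Phi_j-\Phi_j^{\bc}\to0$. For (iv), the coefficient $U-\ii k\sigma_3$ of the $x$-equation is traceless, so $\det\Phi_j$ is $x$-independent; evaluating at $(-1)^j\infty$ gives $\det(\eul^{-\ii A^2t\sigma_3}\C{E}_j)=\det\C{E}_j=\frac14[(w+1/w)^2-(w-1/w)^2]=1$, hence $\det\Psi_j\equiv1$. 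The delicate point of (iii), and the step I expect to fight hardest with, is the $t$-equation: because $\pm A\eul^{-2\ii A^2t}$ is not itself a solution of \eqref{fasivp-a}, one cannot simply declare that the background solves the time-Lax equation. The resolution is that the gauge $\eul^{-\ii A^2t\sigma_3}$ supplies the missing piece: setting $\hat U_j=\eul^{\ii A^2t\sigma_3}U_j\eul^{-\ii A^2t\sigma_3}$, $\hat V_j=\eul^{\ii A^2t\sigma_3}V_j\eul^{-\ii A^2t\sigma_3}$ one verifies the affine identity $\hat V_j-2\ii k^2\sigma_3=2k(\hat U_j-\ii k\sigma_3)-\ii A^2\sigma_3$, where the term $-\ii A^2\sigma_3$ is exactly the contribution of $\partial_t\eul^{-\ii A^2t\sigma_3}=-\ii A^2\sigma_3\eul^{-\ii A^2t\sigma_3}$; hence $\Phi_j^{\bc}$ does solve the full background time-equation. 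With this, $N_j\coloneqq\Phi_{j,t}-(V-2\ii k^2\sigma_3)\Phi_j$ satisfies the same homogeneous $x$-equation $\partial_xN_j=(U-\ii k\sigma_3)N_j$ (by compatibility, $q$ solving \eqref{fasivp-a}) and tends to $0$ at $(-1)^j\infty$, so $N_j\equiv0$.

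\textbf{Symmetries.} Part (v) follows by uniqueness for Volterra equations once the transformations are recorded. Under $(x,k)\mapsto(-x,-\bar k)$ with conjugation and left multiplication by $\sigma_1$ one has $\overline{f(-\bar k)}=-f(k)$, $\overline{w(-\bar k)}=1/w(k)$, $\sigma_1\overline{U(-x,t)}\sigma_1=-U(x,t)$, and $\sigma_1\overline{U_1(t)}\sigma_1=-U_2(t)$; these sign flips turn the integral equation for $\Psi_1$ into that for $\Psi_2$ once the inhomogeneous terms are matched through $\sigma_1\overline{\eul^{-\ii A^2t\sigma_3}\C{E}_1(-\bar k)}\sigma_1=\eul^{-\ii A^2t\sigma_3}\C{E}_2(k)$, which gives the four relations \eqref{fassymmR}. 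For the jump relations \eqref{fassymmseg} I would use $f_+=-f_-$ on $(-A,A)$ together with the across-the-cut value $w_+=\ii w_-$, which yield $\C{E}_{1+}^{[1]}=-\C{E}_{1-}^{[2]}$ and $\C{E}_{2+}^{[2]}=-\C{E}_{2-}^{[1]}$; since the accompanying scalar exponentials then coincide, uniqueness forces $\Psi_{1+}^{[1]}=-\Psi_{1-}^{[2]}$ and $\Psi_{2+}^{[2]}=-\Psi_{2-}^{[1]}$. The supplementary identities \eqref{fassymmsegadd}, valid when the extra boundary values exist, follow identically.
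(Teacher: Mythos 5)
Your proposal is correct and takes essentially the same route as the paper's own proof, which is far terser: the paper disposes of (i)--(iii) with ``follow directly from the integral equations \eqref{fasPsi}'', gets (iv) from tracelessness of $U$ plus $\det\C{E}_j=1$, and gets (v) from exactly the ingredients you list --- its identity $\sigma_1\overline{G_1(-x,-y,t,-\bar k)}\sigma_1^{-1}=G_2(x,y,t,k)$ packages your $f$, $w$, $U_j$ and $\C{E}_j$ relations, and its $\C{E}_{j+}(k)=(-1)^{j+1}\ii\,\C{E}_{j-}(k)\sigma_2$ is precisely your pair of column identities, with uniqueness for the Volterra equations doing the rest in both accounts. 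Your treatment of the $t$-equation in (iii), i.e.\ the identity $\hat V_j-2\ii k^2\sigma_3=2k(\hat U_j-\ii k\sigma_3)-\ii A^2\sigma_3$ with the $-\ii A^2\sigma_3$ term supplied by differentiating the gauge factor $\eul^{-\ii A^2t\sigma_3}$, is a genuine subtlety (the background is not an exact solution of \eqref{fasivp-a}) that the paper leaves entirely implicit, and you handle it correctly. Two small blemishes, neither fatal. First, on the range of integration one has $(-1)^{j+1}(x-y)\ge 0$, not $(-1)^{j}(x-y)>0$; your subsequent conclusion --- boundedness of $\eul^{2\ii s f}$ with $s=(-1)^{j+1}(x-y)\ge0$ exactly when $\Im f\ge 0$, hence analyticity in $\D{C}^+$ --- is the one consistent with the corrected sign, so this is only a typo. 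Second, near $k=\pm A$ your claim that the Neumann iterates ``do not worsen the exponent'' $-\tfrac14$ needs more than the boundedness of the exponentials: the kernel $G_j$ contains both $\C{E}_j$ and $\C{E}_j^{-1}$ and so naively contributes $\ord\bigl((k\mp A)^{-1/2}\bigr)$ per iterate; the rescue is the cancellation $\C{E}_j\operatorname{diag}(1,\eul^{2\ii sf})\C{E}_j^{-1}=I+(\eul^{2\ii sf}-1)\,\C{E}_j\operatorname{diag}(0,1)\C{E}_j^{-1}$ (and its mirror for $j=2$), in which the dangerous $\ord\bigl((k\mp A)^{-1/2}\bigr)$ factor is multiplied by $|\eul^{2\ii sf}-1|\le 2\min(1,s|f|)$ with $|f|=\ord\bigl(|k\mp A|^{1/2}\bigr)$, so the kernel is in fact $\ord(1+|x-y|)$ uniformly near $\pm A$ and the $(k\mp A)^{-1/4}$ singularity comes only from the inhomogeneous term; this detail is suppressed by the paper as well.
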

%-------------------%

%-------------------%
\begin{proof}
Items (i)--(iii) follow directly from the integral equations \eqref{fasPsi}. Since the matrix $U(x,t)$ is traceless and $\det\C{E}_j(k)=1$, $j=1,2$, we get item (iv). Finally, \eqref{fassymmR} in item (v) follows from the symmetries 
\begin{equation}
\sigma_1\overline{U}(-x,t)\sigma_1^{-1}=-U(x,t),\quad\text{and}\quad 
\sigma_1\overline{G_1(-x,-y,t,-\bar{k})}\sigma_1^{-1}=G_2(x,y,t,k),\quad k\in\D{C}, 
\end{equation}
whereas \eqref{fassymmseg} and \eqref{fassymmsegadd} follow from the symmetries
\begin{equation}\label{fassymE12}
\C{E}_{j+}(k)=(-1)^{j+1}\,\ii \,\C{E}_{j-}(k)\sigma_2,\quad j=1,2,\quad k\in (-A,A),
\end{equation}
where $\sigma_2=\bigl(\begin{smallmatrix}0& -\ii \\ \ii & 0\end{smallmatrix}\bigl)$ is the second  Pauli matrix.
\end{proof}
%-------------------%

%---------------------------------------------------------%
%:s.2.2
%---------------------------------------------------------%
\subsection{Spectral functions}\label{fassectspfunct}
The Jost solutions $\Phi_1(x,t,k)$ and $\Phi_2(x,t,k)$ of the Lax pair \eqref{fasLP} are related by a matrix independent of $x$ and $t$, which allows us to introduce the  scattering matrix $S(k)$ as follows:
\begin{equation}\label{fssr}
\Phi_1(x,t,k)=\Phi_2(x,t,k)S(k),\quad k\in\D{R}\setminus [-A,A],
\end{equation}
or, in terms of $\Psi_j(x,t,k)$, $j=1,2$,
\begin{equation}\label{fassrpsi}
\Psi_1(x,t,k)=\Psi_2(x,t,k)\eul^{-(\ii x+2\ii tk)f(k)\sigma_3}S(k)\eul^{(\ii x+2\ii tk)f(k)\sigma_3},\quad k\in\D{R}\setminus [-A,A].
\end{equation}
From the symmetry relations \eqref{fassymmR} it follows that  $S(k)$ can be written as
\begin{equation}\label{fsscatt}
S(k)=\begin{pmatrix}
a_1(k) & -\overline{b(-k)}\\
b(k) & a_2(k)
\end{pmatrix},\quad k\in\D{R}\setminus[-A,A].
\end{equation}
Note that due to the Schwarz symmetry breaking for the solutions $\Psi_j(x,t,k)$, $j=1,2$, see \eqref{fassymmR},  the values of $a_1(k)$ for $k\in\D{C}^+$ and $a_2(k)$ for $k\in\D{C}^-$ are, in general, \emph{not} related. In particular, this implies that $a_1(k)$ and $a_2(k)$ can have different numbers of zeros in the corresponding complex half-planes.

Relation \eqref{fassrpsi} implies that $a_1(k)$, $a_2(k)$, and $b(k)$ can be found in terms of the initial data alone via the following determinants:
\begin{subequations}\label{fasa1a2b}
\begin{alignat}{2}
a_1(k)&=\det\bigl(\Psi_1^{[1]}(0,0,k),\Psi_2^{[2]}(0,0,k)\bigr),&\qquad&k\in\overline{\D{C}^{+}}\setminus[-A,A],\\
a_2(k)&=\det\bigl(\Psi_2^{[1]}(0,0,k),\Psi_1^{[2]}(0,0,k)\bigr),&&k\in\overline{\D{C}^-}\setminus[-A,A],\\
b(k)&=\det\bigl(\Psi_2^{[1]}(0,0,k),\Psi_1^{[1]}(0,0,k)\bigr),&&k\in\D{R}\setminus [-A,A].
\end{alignat}
\end{subequations}

From \eqref{fasa1a2b} and Proposition \ref{fasproppsi1} (i) and (ii) we conclude that $a_j(k)$, $j=1,2$, and $b(k)$ have the following large $k$ behaviors:
\begin{alignat*}{3}
a_1(k)&=1+\ord(k^{-1}),&\quad&k\in \overline{\D{C}^{+}},&\quad&k\to\infty,\\
a_2(k)&=1+\ord(k^{-1}),&&k\in \overline{\D{C}^-},&&k\to\infty,\\
b(k)&=\ord(k^{-1}),&&k\in\D{R},&&k\to\infty.
\end{alignat*}

Defining $a_{1+}(k)$ and $a_{2-}(k)$ for $k\in(-A,A)$ as the limits of $a_{1}(k)$ and $a_{2}(k)$ from $\D{C}^{+}$ and $\D{C}^-$ respectively, we have
\begin{equation}
\begin{split}
a_{1+}(k)&=\det\bigl(\Psi_{1+}^{[1]}(0,0,k),
\Psi_{2+}^{[2]}(0,0,k)\bigr),\quad k\in (-A,A),\\
a_{2-}(k)&=\det\bigl(\Psi_{2-}^{[1]}(0,0,k),
\Psi_{1-}^{[2]}(0,0,k)\bigr),
\quad k\in (-A,A).
\end{split}
\end{equation}
Moreover, when the initial data $q_0(x)$ converges exponentially fast to its boundary values, we can define $a_{1-}(k)$, $a_{2+}(k)$ and $b_{\pm}(k)$ for $k\in(-A,A)$ by taking the corresponding limits in \eqref{fasa1a2b}:
\begin{subequations}
\begin{alignat}{2}
a_{1-}(k)&=\det\bigl(\Psi_{1-}^{[1]}(0,0,k),\Psi_{2-}^{[2]}(0,0,k)\bigr),&\quad& k\in (-A,A),\\
a_{2+}(k)&=\det\bigl(\Psi_{2+}^{[1]}(0,0,k),\Psi_{1+}^{[2]}(0,0,k)\bigr),&&k\in (-A,A),\\
b_\pm(k)&=\det\bigl(\Psi_{2\pm}^{[1]}(0,0,k),\Psi_{1\pm}^{[1]}(0,0,k)\bigr),&&k\in (-A,A).
\end{alignat}
\end{subequations}
The symmetry relations \eqref{fssymmpsi} yield the following symmetries of the spectral functions:
\begin{equation}\label{fasajsym}
\overline{a_1(-\bar{k})}=a_1(k),\quad k\in\overline{\D{C}^{+}}\setminus[-A, A]\quad\text{and}\quad
\overline{a_2(-\bar{k})}=a_2(k),\quad k\in\overline{\D{C}^-}\setminus[-A, A],
\end{equation}
whereas \eqref{fassymmsegadd} implies that 
\begin{equation}\label{fasajsymA}
a_{1\pm}(k)=-a_{2\mp}(k)\quad\text{and}\quad b_{\pm}(k)=-\overline{b_{\mp}(-k)},\quad k\in (-A,A).
\end{equation}
From Proposition \ref{fasproppsi1} (iv), \eqref{fsjost}, and \eqref{fssr} it follows that $a_1(k)$, $a_2(k)$, and $b(k)$ satisfy the determinant relations:
\begin{equation}\label{fasdeterm}
\begin{split}
a_1(k)a_2(k)+b(k)\overline{b(-k)}&=1,\quad k\in\D{R}\setminus [-A,A],\\
a_{1\pm}(k)a_{2\pm}(k)+b_{\pm}(k)\overline{b_{\pm}(-k)}&=1,\quad  k\in (-A,A).
\end{split}
\end{equation}
Finally, we point out that $a_1(k)$, $a_2(k)$, and $b(k)$ are $\ord\bigl((k\mp A)^{-\frac{1}{2}}\bigr)$ as $k\to\pm A$.

%-------------------%
%:prop 2.2
%-------------------%
\begin{proposition}[pure step initial data]\label{fasbgiv}
Consider problem \eqref{fasivp} with initial data
\begin{equation}\label{fasstepiv}
q_0(x)=q_{0,R}(x)=
\begin{cases}
A, &x>R,\\
-A, &x<R,
\end{cases}
\end{equation}
for some $A>0$ and $R\in\D{R}$. Introduce 
\begin{equation}\label{fash}
 h(k)\coloneqq(k^2+A^2)^{\frac{1}{2}},
\end{equation}
which is defined in $\D{C}\setminus[-\ii A,\ii A]$ and is fixed by the 
asymptotics $h(k)=k+\ord(k^{-1})$ as $k\to\infty$. Define 
\begin{equation}\label{faslambda}
\lambda_j(k)\coloneqq\ii(f(k)+(-1)^{j+1}h(k)),\quad j=1,2. 
\end{equation}
Then the spectral functions associated with this problem have the following form, according to the sign of $R\in\D{R}$:
\begin{enumerate}[\rm(i)]
\item 
For $R>0$,
\begin{subequations}\label{fasR>0}
\begin{align}\label{fasR>0a_1}
a_1(k)&=\frac{1}{2f(k)h(k)}
	\bigl(\eul^{2\lambda_1(k)R}
	\bigl(A^2+\ii k\lambda_2(k)\bigr)
	-\eul^{2\lambda_2(k)R}
	\bigl(A^2+\ii k\lambda_1(k)\bigr)
	\bigr),\\
\label{fasR>0a_2}
a_2(k)&=\frac{1}{2f(k)h(k)}
	\bigl(\eul^{-2\lambda_2(k)R}
	\bigl(A^2-\ii k\lambda_1(k)\bigr)
	-\eul^{-2\lambda_1(k)R}
	\bigl(A^2-\ii k\lambda_2(k)\bigr)
	\bigr),\\
\label{fasR>0b}
b(k)&=\frac{-\ii A}{2f(k)h(k)}
	\bigl(\eul^{2\ii h(k)R}\bigl(h(k)+k\bigr)
	+\eul^{-2\ii h(k)R}\bigl(h(k)-k\bigr)
	\bigr).
\end{align}
\end{subequations}
\item 
For $R=0$,
\begin{equation}\label{fasR=0}
a_1(k)=a_2(k)=\frac{k}{f(k)},\qquad b(k)=\frac{-\ii A}{f(k)}.
\end{equation}
\item 
For $R<0$,
\begin{subequations}\label{fasR<0}
\begin{align}\label{fasR<0a_1}
a_1(k)&=\frac{1}{2f(k)h(k)}
	\bigl(\eul^{-2\lambda_2(k)R}
	\bigl(A^2-\ii k\lambda_1(k)\bigr)
	-\eul^{-2\lambda_1(k)R}
	\bigl(A^2-\ii k\lambda_2(k)\bigr)
	\bigr),\\
\label{fasR<0a_2}
a_2(k)&=\frac{1}{2f(k)h(k)}
	\bigl(\eul^{2\lambda_1(k)R}
	\bigl(A^2+\ii k\lambda_2(k)\bigr)
	-\eul^{2\lambda_2(k)R}
	\bigl(A^2+\ii k\lambda_1(k)\bigr)
	\bigr),\\
\label{fasR<0b}
b(k)&=\frac{-\ii A}{2f(k)h(k)}
	\bigl(\eul^{2\ii h(k)R}
	\bigl(h(k)+k\bigr)
	+\eul^{-2\ii h(k)R}
	\bigl(h(k)-k\bigr)
	\bigr).
\end{align}
\end{subequations}
\end{enumerate}
\end{proposition}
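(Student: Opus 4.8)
The key observation is that for the pure step data \eqref{fasstepiv} the potential $U(x,0)$ is \emph{piecewise constant}, so at $t=0$ the spatial Lax equation \eqref{fasLPa} reduces on each constancy interval to a constant-coefficient linear ODE that integrates explicitly; feeding the resulting Jost solutions into the determinant representations \eqref{fasa1a2b} then yields $a_1,a_2,b$. I would carry this out in detail for $R>0$ and recover the remaining cases afterwards. Since $U_{12}=q_0(x)$ jumps at $x=R$ while $U_{21}=-\overline{q_0(-x)}$ jumps at $x=-R$, for $R>0$ there are three regions: $U=U_1(0)$ on $x<-R$, $U=\bigl(\begin{smallmatrix}0&-A\\A&0\end{smallmatrix}\bigr)$ on the strip $-R<x<R$, and $U=U_2(0)$ on $x>R$. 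The outer coefficient matrices $-\ii k\sigma_3+U_j$ have eigenvalues $\mp\ii f(k)$ and are diagonalized precisely by $\C{E}_j(k)$ from \eqref{fasK} — this is exactly the statement that $\Phi_j^{\bc}$ solves \eqref{fasLPa} — while the middle matrix $M_{\mathrm{mid}}\coloneqq-\ii k\sigma_3+\bigl(\begin{smallmatrix}0&-A\\A&0\end{smallmatrix}\bigr)$ has $\det M_{\mathrm{mid}}=k^2+A^2=h^2(k)$, hence eigenvalues $\pm\ii h(k)$ and, by Cayley--Hamilton, $M_{\mathrm{mid}}^2=-h^2 I$, so that $\eul^{\pm M_{\mathrm{mid}}R}=\cos(hR)\,I\pm h^{-1}\sin(hR)\,M_{\mathrm{mid}}$.

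Next, assemble the Jost solutions at the origin. On $x\le-R$ the Volterra equation \eqref{fasPsi} for $\Psi_1$ has vanishing integrand (there $U-U_1\equiv0$), so $\Psi_1(x,0,k)\equiv\C{E}_1(k)$; continuity across $x=-R$ propagates this through the strip to give $\Psi_1(0,0,k)=\eul^{M_{\mathrm{mid}}R}\C{E}_1(k)\eul^{\ii Rf\sigma_3}$, and symmetrically $\Psi_2(0,0,k)=\eul^{-M_{\mathrm{mid}}R}\C{E}_2(k)\eul^{-\ii Rf\sigma_3}$. Since $\eul^{\pm\ii Rf\sigma_3}$ merely rescales the $j$-th column by $\eul^{\pm\ii Rf}$, the formulas \eqref{fasa1a2b} collapse to, for instance,
\[
a_1(k)=\eul^{2\ii Rf(k)}\det\bigl(\eul^{M_{\mathrm{mid}}R}\C{E}_1^{[1]}(k),\,\eul^{-M_{\mathrm{mid}}R}\C{E}_2^{[2]}(k)\bigr),
\]
with analogous expressions for $a_2$ (prefactor $\eul^{-2\ii Rf}$) and for $b$ (the $f$-prefactors cancel). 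Expanding $\eul^{\pm M_{\mathrm{mid}}R}$ by the $\cos/\sin$ formula, using multilinearity of the determinant together with $\det(M_{\mathrm{mid}}u,M_{\mathrm{mid}}v)=\det(M_{\mathrm{mid}})\det(u,v)=h^2\det(u,v)$, reduces each expression to $\cos(2hR)$ and $\sin(2hR)$ terms; rewriting these through $\eul^{\pm2\ii hR}$ and combining with the $\eul^{\pm2\ii Rf}$ prefactor produces exactly the exponentials $\eul^{2\lambda_{1,2}R}$ from \eqref{faslambda} (respectively $\eul^{\pm2\ii hR}$ in $b$).

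What remains are the $R$-independent scalar determinants $\det(\C{E}_1^{[1]},\C{E}_2^{[2]})$, $\det(M_{\mathrm{mid}}\C{E}_1^{[1]},\C{E}_2^{[2]})$, $\det(\C{E}_1^{[1]},M_{\mathrm{mid}}\C{E}_2^{[2]})$ and their counterparts for $a_2,b$. These are evaluated from \eqref{fasK} using $(k+A)w^2=f=(k-A)w^{-2}$, $w^2+w^{-2}=2k/f$, $w^2-w^{-2}=-2A/f$, together with $\lambda_1+\lambda_2=2\ii f$ and $\lambda_2-\lambda_1=-2\ii h$; this is where the compact entries $A^2\pm\ii k\lambda_j$ and $h\pm k$ of \eqref{fasR>0} appear (one checks e.g. $\det(\C{E}_1^{[1]},\C{E}_2^{[2]})=k/f$). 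I expect this final bookkeeping — turning the raw $w$-expressions into the stated compact form — to be the only real labor, and the main place where errors can enter through the branch choices of $w,f,h$.

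Finally, the degenerate case $R=0$ has no strip, so $\Psi_1(0,0,k)=\C{E}_1(k)$ and $\Psi_2(0,0,k)=\C{E}_2(k)$, and \eqref{fasR=0} drops out of the same constant determinants (it is also the $R\to0$ limit of \eqref{fasR>0}). The case $R<0$ is identical in structure, the only change being that the strip potential becomes $\bigl(\begin{smallmatrix}0&A\\-A&0\end{smallmatrix}\bigr)$, with the same eigenvalues $\pm\ii h$; this interchanges the roles of the two outer regions and delivers \eqref{fasR<0}. Alternatively, \eqref{fasR<0} follows from \eqref{fasR>0} via the reflection $x\mapsto-x$ together with the symmetries of Proposition \ref{fasproppsi1}(v), which swap $a_1\leftrightarrow a_2$.
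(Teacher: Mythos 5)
Your proposal is correct, and its overall strategy is the same as the paper's: at $t=0$ the potential is piecewise constant with three regions, the spatial Lax equation integrates explicitly across the strip, and the spectral functions are read off from the resulting Jost solutions; the $R=0$ and $R<0$ cases are handled exactly as the paper does (for $R<0$ the paper also redoes the strip computation with the flipped potential, via the identification \eqref{fasPsi_2ijR}). The differences are only in execution, and they are worth noting: the paper works entry-wise, turning the Volterra equations \eqref{fasPsi} into scalar second-order constant-coefficient Cauchy problems \eqref{faspsi11adiff} whose solutions give the exponentials $\eul^{\lambda_j(k)(x+R)}$, and then evaluates the scattering relation at the strip edge, $S(k)=\eul^{\ii Rf(k)\sigma_3}\Psi_2^{-1}(R,0,k)\Psi_1(R,0,k)\eul^{-\ii Rf(k)\sigma_3}$ with $\Psi_2(R,0,k)=\C{E}_2(k)$ (see \eqref{fasSstep}); you instead exponentiate the full $2\times2$ matrix via Cayley--Hamilton ($M_{\mathrm{mid}}^2=-h^2I$) and extract $a_1,a_2,b$ from the determinants \eqref{fasa1a2b} at $x=0$ by bilinearity together with $\det(M_{\mathrm{mid}}u,M_{\mathrm{mid}}v)=h^2\det(u,v)$. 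Your packaging is tidier: the $\cos(2hR)$, $\sin(2hR)$ structure and the prefactors $\eul^{\pm2\ii Rf}$ appear in one stroke, and the remaining constant determinants do evaluate as you claim (e.g.\ for $u=\C{E}_1^{[1]}$, $v=\C{E}_2^{[2]}$ one gets $\det(u,v)=k/f$ and $\det(M_{\mathrm{mid}}u,v)-\det(u,M_{\mathrm{mid}}v)=2\ii(A^2/f-k)$, which reproduces \eqref{fasR>0a_1} exactly). One small caveat: your closing ``alternative'' derivation of \eqref{fasR<0} from \eqref{fasR>0} via Proposition \ref{fasproppsi1}(v) is imprecise, since those symmetries are internal to a \emph{fixed} potential (they yield $\overline{a_j(-\bar k)}=a_j(k)$, see \eqref{fasajsym}, not a swap $a_1\leftrightarrow a_2$); relating the $R$ and $-R$ problems requires the external symmetry $q_0(x)\mapsto -q_0(-x)$ combined with $k\mapsto-k$ and the conjugation $\Psi\mapsto\sigma_3\Psi\sigma_3$. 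Since this is offered only as an aside and your primary route for $R<0$ (direct computation with the strip potential $\bigl(\begin{smallmatrix}0&A\\-A&0\end{smallmatrix}\bigr)$) is sound, it does not affect the validity of the proof.
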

%-------------------%

%-------------------%
\begin{proof}
See Appendix \ref{appendix}.
\end{proof}
%-------------------%

%-------------------%
%:rem 2.3
%-------------------%
\begin{remark}
Note that for any $R\in\D{R}$, $a_1(k)$, $a_2(k)$, and $b(k)$ have no jump across $[-\ii A,\ii A]$. Also, if we take the limits $R\to\pm0$ in the expressions of the spectral functions for $R>0$ and $R<0$, we arrive at \eqref{fasR=0}.
\end{remark}
%-------------------%

%-------------------%
%:rem 2.4
%-------------------%
\begin{remark}
The NNLS equation is not translation invariant. Therefore, shifting the initial data by a constant value can drastically affect the behavior of the solution \cite{RS21CIMP}. Formulas \eqref{fasR>0}--\eqref{fasR<0} illustrate this in terms of the spectral functions in the case of pure step initial data \eqref{fasstepiv}.
\end{remark}
%-------------------%

The scattering map associates to $q_0(x)$ 
\begin{enumerate}[(i)]
\item
the spectral functions $b(k)$ and $a_j(k)$, $j=1,2$,
\item
the discrete data, which are the zeros of $a_j(k)$, $j=1,2$ and the associated norming constants.
\end{enumerate}
In studying initial value problems for integrable nonlinear PDEs, the assumptions about these zeros usually rely on properties of the discrete spectrum associated with step-like initial data involving prescribed boundary values, like \eqref{fasivp-c} (see, e.g., \cites{BKS11, BM17, IU86, J15, RS21CIMP}). Alternatively, the discrete spectrum can be added to the formulation of the associated Riemann--Hilbert problem for studying the evolution of more general initial data, which includes solitons \cites{CJ16, V02, ZS73}.

In the present paper we consider initial data which are characterized in spectral terms and which are motivated by the pure step initial data with $R=0$. Namely, we make the following assumptions.

%-------------------%
%:ass 2.5
%-------------------%
\begin{assumptions}[on the zeros of the spectral functions $a_1(k)$ and $a_2(k)$]\label{assumpts}
We assume that 
\begin{enumerate}[({A}1)]
\item 
$a_1(k)$ and $a_2(k)$ do not have zeros in $\overline{\D{C}^+}\setminus(-A,A)$ and $\overline{\D{C}^-}\setminus(-A,A)$, respectively;
\item
for $k\in (-A,A)$, both $a_{1+}(k)$ and $a_{2-}(k)$ have a simple zero at $k=0$, i.e.,
\begin{equation}\label{fask=0}
\begin{split}
a_{1+}(k)&=a_{10}k+\ord(k^2),\quad k\to 0,\quad a_{10}\neq 0,\\
a_{2-}(k)&=a_{20}k+\ord(k^2),\quad k\to 0,\quad a_{20}\neq 0.
\end{split}
\end{equation}
\end{enumerate}
Then from \eqref{fasajsymA} and \eqref{fasajsym} it follows that
\begin{equation}
a_{20}=-a_{10}\quad\text{and}\quad\Re a_{10}=0.
\end{equation}
\end{assumptions}
%-------------------%

%---------------------------------------------------------%
%:s.2.3
%---------------------------------------------------------%
\subsection{Riemann--Hilbert problem}
Taking into account the analytical properties of the columns of the matrices $\Psi_j(x,t,k)$, $j=1,2$ (see Proposition \ref{fasproppsi1} (i) and (ii)), we define the $2\times2$ sectionally holomorphic matrix $M(x,t,k)$ as follows:
\begin{equation}
\label{fasM}
M(x,t,k)=
\begin{cases}
\eul^{\ii A^2t\sigma_3}\Bigl(\frac{\Psi_1^{[1]}(x,t,k)}{a_{1}(k)},\Psi_2^{[2]}(x,t,k)\Bigr),&k\in\D{C}^+,\\
\eul^{\ii A^2t\sigma_3}\Bigl(\Psi_2^{[1]}(x,t,k),\frac{\Psi_1^{[2]}(x,t,k)}{a_{2}(k)}\Bigr),&k\in\D{C}^-.
\end{cases}
\end{equation}
By Assumptions \ref{assumpts}, $a_1(k)$ and $a_2(k)$ have no zeros in the corresponding half-planes and thus the matrix $M(x,t,k)$ does not have poles in $\D{C}\setminus\D{R}$. From the scattering relation \eqref{fassrpsi}, the symmetries \eqref{fassymmseg}, and the relations \eqref{fasajsymA} it follows that $M(x,t,k)$ satisfies a multiplicative jump condition:
\begin{subequations}\label{fasjump}
\begin{equation}\label{fasj}
M_+(x,t,k)=M_-(x,t,k)J(x,t,k),\quad k\in\D{R}.
\end{equation}
Here and below $M_{+}(\,\cdot\,,\,\cdot\,,k)$ and $M_-(\,\cdot\,,\,\cdot\,,k)$ denote the nontangental limits of $M(\,\cdot\,,\,\cdot\,,k)$ as $k$ approaches the contour from the left and right sides, respectively (here, the real line $\D{R}$ is oriented from $-\infty$ to $+\infty$). The jump matrix $J(x,t,k)$ has the following form:
\begin{equation}\label{fasjbrh}
J(x,t,k)=
\begin{cases}
\begin{pmatrix}
1+r_1(k)r_2(k) & r_2(k)\eul^{-(2\ii x+4\ii tk)f(k)}\\
r_1(k)\eul^{(2\ii x+4\ii tk)f(k)} & 1
\end{pmatrix},&k\in\D{R}\setminus [-A,A],\\
-\ii\sigma_2,&k\in(-A,A),
\end{cases}
\end{equation}
with the reflection coefficients
\begin{equation}\label{fasr_j}
r_1(k)\coloneqq\frac{b(k)}{a_1(k)}\quad\text{and}\quad
r_2(k)\coloneqq\frac{\overline{b(-k)}}{a_2(k)},\quad k\in\D{R}\setminus[-A, A].
\end{equation}
\end{subequations}

%-------------------%
%:rem 2.6
%-------------------%
\begin{remark}\label{faszerr1r2}
If $b(k)$ can be analytically continued into a band containing $\D{R}$, we can also define $r_j(k)$ in this band. Then in view of \eqref{fasajsymA}, $r_{1\pm}(k)=r_{2\mp}(k)$ and therefore $1+r_1(k)r_2(k)$ does not have a jump across $(-A, A)$. From the determinant relation \eqref{fasdeterm} it follows that $1+r_1(k)r_2(k)=a_1^{-1}(k)a_2^{-1}(k)$, so $1+r_1(k)r_2(k)$ can have simple zeros at $k=\pm A$. This takes place, e.g., for pure step initial data \eqref{fasstepiv} (see~\cite{J15}*{Section 3}).
\end{remark}
%-------------------%

In view of Proposition \ref{fasproppsi1} (i) and (ii), and Assumptions \ref{assumpts}, $M(x,t,k)$ has weak singularities at $k=\pm A$:
\begin{equation}\label{faspmA}
M(x,t,k)=\ord\bigl((k\pm A)^{-\frac{1}{4}}\bigr),\quad k\to\mp A.
\end{equation}
Also it has the normalization condition for large $k$:
\begin{equation}\label{faskinf}
M(x,t,k)=I+\ord(k^{-1}),\quad k\to\infty,
\end{equation}
where $I$ is the identity matrix. Finally, $M(x,t,k)$ satisfies the following conditions at $k=0$:
\begin{subequations}\label{fask=0bRH}
\begin{align}
&\lim_{\substack{k\to 0,\\k\in\D{C}^+}}
	kM^{[1]}(x,t,k)=\frac{\gamma_+}{a_{10}}\eul^{-2Ax}
	M^{[2]}_+(x,t,0),\\
&\lim_{\substack{k\to 0,\\k\in\D{C}^-}}
	kM^{[2]}(x,t,k)=\frac{\gamma_-}{a_{20}}\eul^{-2Ax}
	M^{[1]}_-(x,t,0),
\end{align}
\end{subequations}
where $a_{10}$ and $a_{20}$ were introduced in \eqref{fask=0}, and $\gamma_\pm$ are defined as follows:
\[
\Phi_{1+}^{[1]}(x,t,0)=\gamma_+\Phi_{2+}^{[2]}(x,t,0)\quad\text{and}\quad\Phi_{1-}^{[2]}(x,t,0)=\gamma_-\Phi_{2-}^{[1]}(x,t,0).
\]
From \eqref{fassymmseg} and \eqref{fassymmR} one concludes that $\gamma_+=\gamma_-$ and $|\gamma_+|=1$.

%-------------------%
%:rem 2.7
%-------------------%
\begin{remark} \label{fasbgamma}
If $b(k)$ can be analytically continued into a band, the norming constants $\gamma_\pm$ can be found in terms of $b(k)$ as follows: $\gamma_+=b_+(0)$ and $\gamma_-=-\overline{b_-(0)}$.
\end{remark}
%-------------------%

Thus we arrive at the following basic Riemann--Hilbert (RH) problem: 

%-------------------%
%:basic RHP
%-------------------%
\begin{rh-pb*}
Find a sectionally analytic $2\times 2$ matrix $M(x,t,k)$, which \begin{enumerate}[(i)]
\item
satisfies the jump condition \eqref{fasjump} across the real axis, 
\item
has weak singularities \eqref{faspmA} at $k=\pm A$, 
\item
converges to the identity matrix as $k\to\infty$, 
\item
and satisfies the singularity conditions \eqref{fask=0bRH} at $k=0$.
\end{enumerate}
\end{rh-pb*}
%-------------------%

Using standard arguments based on Liouville's theorem, it can be shown that the solution of this RH problem is unique, if it exists.

The solution $q(x,t)$ of the initial value problem \eqref{fasivp} can be found from the large $k$ expansion of the solution $M(x,t,k)$ of the basic RH problem (follows from \eqref{fasLPa}):
\begin{equation}\label{fasasol}
q(x,t)=2\ii \eul^{-2\ii A^2t}
\lim_{k\to\infty}kM_{12}(x,t,k),\quad
q(-x,t)=-2\ii \eul^{-2\ii A^2t}
\lim_{k\to\infty}\overline{kM_{21}(x,t,k)}.
\end{equation}
Thus both $q(x,t)$ and $q(-x,t)$ can be found from $M(x,t,k)$ evaluated for $x\geq0$.

%-------------------%
%:rem 2.8
%-------------------%
\begin{remark}
Since the jump matrix $J(x,t,k)$ satisfies the condition
\begin{equation}
	\sigma_1 \overline{J(-x,t,-k)} \sigma_1^{-1} = \begin{pmatrix}
	a_2(k) & 0 \\ 0 & \frac{1}{a_2(k)}\end{pmatrix} J(x,t,k)\begin{pmatrix}
	a_1(k) & 0 \\ 0 & \frac{1}{a_1(k)}
	\end{pmatrix},
	\quad k\in\D{R}\setminus\{\pm A\},
\end{equation}
the solution $M(x,t,k)$ of the basic RH problem satisfies the following symmetry conditions (see \cite{RSs}*{(2.55)}):
\begin{equation}
	M(x,t,k)=\begin{cases}
	\sigma_1 \overline{M(-x,t,-\bar k)}\sigma_1^{-1} 
	\begin{pmatrix}
	\frac{1}{a_1(k)} & 0 \\ 0 & a_1(k)
	\end{pmatrix}, & k\in{\mathbb C}^+, \\
	\sigma_1 \overline{M(-x,t,-\bar k)}\sigma_1^{-1}
	\begin{pmatrix}
	a_2(k) & 0 \\ 0 & \frac{1}{a_2(k)}
	\end{pmatrix}, & k\in{\mathbb C}^-.
	\end{cases}
\end{equation}
\end{remark}
%-------------------%

%---------------------------------------------------------%
%:s.2.4
%---------------------------------------------------------%
\subsection{One-soliton solution}\label{fasonesol}
The one-soliton solution of the focusing NNLS equation satisfying boundary conditions \eqref{fasivp-c} was obtained in \cite{HL16}*{Section 4}, by using the Darboux transformation and in \cite{ALM18}*{Section 3} via the inverse scattering transform method. Here we rederive this soliton solution using the Riemann--Hilbert approach. Consider the basic RH problem in the reflectionless case, i.e., with $r_1(k)\equiv r_2(k)\equiv0$:
\begin{subequations}\label{fasMsol}
\begin{alignat}{2}\label{fasMsoljump}
M^{\sol}_+(x,t,k)&=-\ii M^{\sol}_-(x,t,k)\sigma_2,&\quad&
k\in (-A,A),\\
M^{\sol}(x,t,k)&=I+\ord(k^{-1}), &&k\to\infty,\\
M^{\sol}(x,t,k)&=\ord\bigl((k\mp A)^{-\frac{1}{4}}\bigr),&& k\to\pm A
\end{alignat}
\end{subequations}
and with conditions at $k=0$ of type 
\eqref{fask=0bRH}:
\begin{subequations}\label{fask=0sol}
\begin{align}
\label{fask=0sola}
&\lim_{\substack{k\to 0,\\k\in\D{C}^+}}k(M^{\sol})^{[1]}(x,t,k)=d_0\, \eul^{-2Ax}(M^{\sol})^{[2]}_+(x,t,0),\\
&\lim_{\substack{k\to 0,\\k\in\D{C}^-}}k(M^{\sol})^{[2]}(x,t,k)=-d_0\,\eul^{-2Ax}(M^{\sol})^{[1]}_-(x,t,0),
\end{align}
\end{subequations}
for some $d_0=\frac{\gamma_+}{a_{10}}$, with $|\gamma_+|=1$.

In the reflectionless case, the spectral functions $a_1(k)$ and $a_2(k)$ are as follows (see the trace formula in \cite{ALM18}*{Section 3}):
\begin{equation}\label{fasa_1a_2_sol}
a_1(k)=\frac{k+f(k)-\ii A}{k+f(k)+\ii A}\quad\text{and}\quad
a_2(k)=\frac{k+f(k)+\ii A}{k+f(k)-\ii A}.
\end{equation}
From \eqref{fasa_1a_2_sol} we have $a_{10}=-\frac{\ii}{2A}$ (see \eqref{fask=0}), which implies that
\begin{equation}\label{fasd_0mod}
d_0=2A\eul^{\ii\phi_0}\quad\text{with some }\phi_0\in\D{R}.
\end{equation}
The jump and singularity conditions \eqref{fasMsoljump} and \eqref{fask=0sol} imply that the solution of the RH problem above can be written in the form
\begin{equation}
\label{fasMssol}
M^{\sol}(x,t,k)=N(x,t,k)\C{E}_2(k),\quad k\in\D{C}\setminus\{\pm A, 0\},
\end{equation}
where $\C{E}_2(k)$ is defined in \eqref{fasK} and $N(x,t,k)=I+\frac{N_1(x,t)}{k}$ with some matrix $N_1(x,t)$. On the other hand, conditions \eqref{fask=0sol} imply that $M_+(x,t,k)$ can be written as follows:
\begin{equation}
M_+(x,t,k)=
\begin{pmatrix}
\alpha(x,t)& 0\\
0& \beta(x,t)
\end{pmatrix}
\left(
\begin{pmatrix}
d_0\eul^{-2Ax}& 1\\
d_0\eul^{-2Ax}& 1
\end{pmatrix}
+P(x,t)k+\ord(k^2)
\right)
\begin{pmatrix}
1/k& 0\\
0& 1
\end{pmatrix},
\quad k\to 0,
\end{equation}
with some scalars $\alpha(x,t)$, $\beta(x,t)$, and a matrix-valued function $P(x,t)$. Then, using the relation $N(x,t,k)=M^{\sol}(x,t,k)\C{E}^{-1}_2(k)$ and
\begin{equation}
\C{E}^{-1}_{2+}(k)=
\frac{1}{\sqrt{2}}
\begin{pmatrix}
1 & 1\\
-1 & 1
\end{pmatrix}
+\frac{\ii k}{2\sqrt{2}A}
\begin{pmatrix}
-1& 1\\
-1& -1
\end{pmatrix}
+\ord(k^2),\quad k\to 0,
\end{equation}
we conclude that $\alpha$, $\beta$, and $N_1$ are independent of $t$. Moreover,
\begin{equation}
N_1(x)=d_0\eul^{-2Ax}
\begin{pmatrix}
\alpha(x)& 0\\
\beta(x)& 0
\end{pmatrix}
\C{E}_{2+}^{-1}(0)
\quad\text{with}\quad
\alpha(x)= - \beta(x) = -\frac{\sqrt{2}A}{2A+\ii d_0\eul^{-2Ax}}.
\end{equation}
Thus $M^{\sol}(x,t,k)$ is independent of $t$ and has the form
\begin{equation}\label{fasMssol-1}
M^{\sol}(x,t,k)=
\left(
I+\frac{\mu(x)}{k}
\begin{pmatrix}
-1& -1\\
1& 1
\end{pmatrix}
\right)
\C{E}_2(k)
\end{equation}
with $\mu(x)=\frac{Ad_0\eul^{-2Ax}}{2A+\ii d_0\eul^{-2Ax}}$. Finally, using \eqref{fasasol} and the notation $\phi_0$ from \eqref{fasd_0mod}, we obtain the exact one-soliton solution as follows (see \cite{ALM18}*{(3.106)} and \cite{HL16}*{(17)}):
\begin{equation}\label{fassoliton}
q(x,t)=A\eul^{-2\ii A^2t}\left(1-\frac{2\ii \eul^{-2Ax+\ii \phi_0}}{1+\ii \eul^{-2Ax+\ii\phi_0}}\right)\equiv A\eul^{-2\ii A^2t}\tanh(Ax-\ii \phi_0/2-\ii \pi/4).
\end{equation}

%---------------------------------------------------------%
%:s.3
%---------------------------------------------------------%
\section{Long-time asymptotic analysis}\label{faslta}
%---------------------------------------------------------%
%:s.3.1
%---------------------------------------------------------%
\subsection{Signature table}
Introduce the phase function $\theta(k,\xi)$ as follows:
\begin{equation}
\theta(k,\xi)\coloneqq 4\xi f(k)+2kf(k),\quad \xi\coloneqq\frac{x}{4t}.
\end{equation}
As noticed above, we can consider $\xi\ge 0$ only. In terms of $\theta(k,\xi)$, the exponentials in \eqref{fasjbrh}  have the form $\eul^{2\ii t\theta(k,\xi)}$ or $\eul^{-2\ii t\theta(k,\xi)}$, and the following transformations of the basic RH problem  are guided by the signature structure of $\Im\theta(k,\xi)$.

Since $\theta(k,\xi)=2k^2+4\xi k+\ord(1)$ as $k\to\infty$,  the large $k$ behavior of the signature table for $\Im\theta(k,\xi)$ is the same as for $\Im(4\xi k+2k^2)$. Though the equation $\frac{\dd}{\dd k}\theta(k,\xi)=0$ has two zeros for all $\xi >0$:
\begin{equation}\label{fascrit}
k_1(\xi)=-\frac{1}{2}\left(\xi+\sqrt{\xi^2+2A^2}\right)\quad\text{and}
\quad k_2(\xi)=-\frac{1}{2}\left(\xi-\sqrt{\xi^2+2A^2}\right),
\end{equation}
the signature table of $\Im\theta(k,\xi)$ involves $k_1(\xi)$ only, see Figures \ref{fas_sign_pw} and \ref{fas_sign_plat}. Namely, one can distinguish two cases:
\begin{enumerate}[(1)]
\item 
$\xi\in(A/2,+\infty)$. In this case, the  signature table of $\Im\theta(k,\xi)$ is as in Figure \ref{fas_sign_pw}. The curves separating the domains where $\Im\theta(k,\xi)>0$ and $\Im\theta(k,\xi)<0$ intersect at $k=k_1(\xi)$. 
\item
$\xi\in(0,A/2)$. In this case, the signature table of $\Im\theta(k,\xi)$ is as in Figure \ref{fas_sign_plat}. The curves separating the domains where $\Im\theta(k,\xi)>0$ and $\Im\theta(k,\xi)<0$ intersect at $k=-2\xi$. This is because of
\begin{equation}\label{fascut}
\Im\theta_\pm(k,\xi)=\pm 2(2\xi+k)\sqrt{k^2-A^2},\quad k\in(-A,A).
\end{equation}
\end{enumerate}

%-------------------%
%:figs 3.1 & 3.2
%-------------------%
\begin{figure}[h]
\begin{minipage}[h]{0.49\linewidth}
\centering{\includegraphics[width=0.99\linewidth]{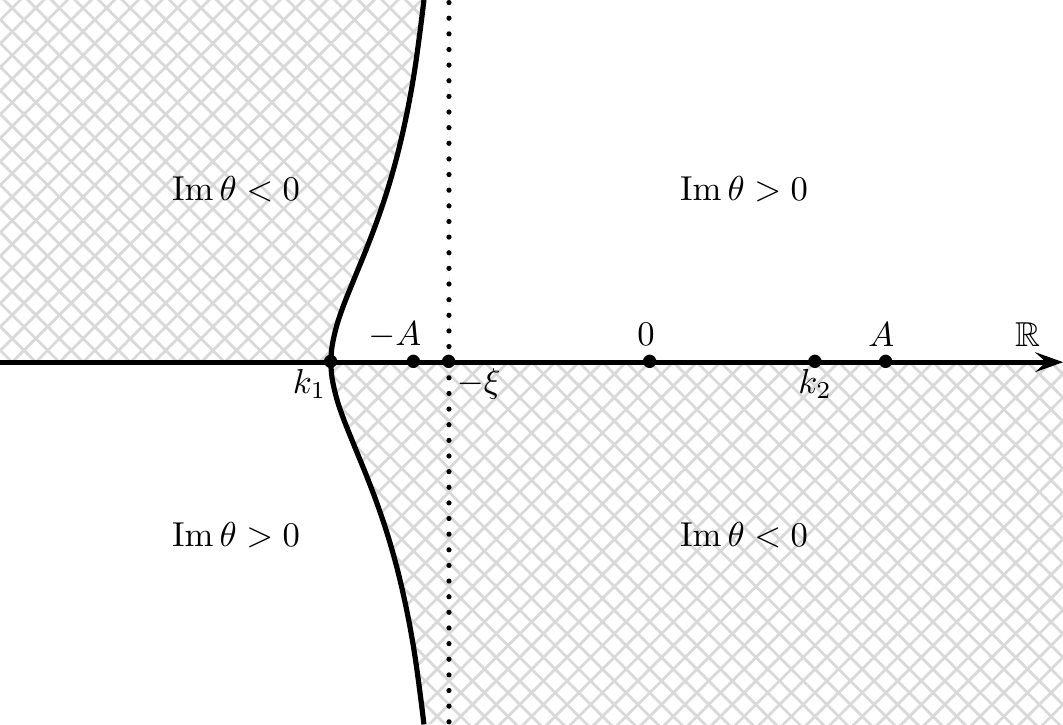}}
\caption{Signature table of $\Im\theta(k,\xi)$ in the modulated wave region $\xi>A/2$.}
\label{fas_sign_pw}
\end{minipage}
\hfill
\begin{minipage}[h]{0.49\linewidth}
\centering{\includegraphics[width=0.99\linewidth]{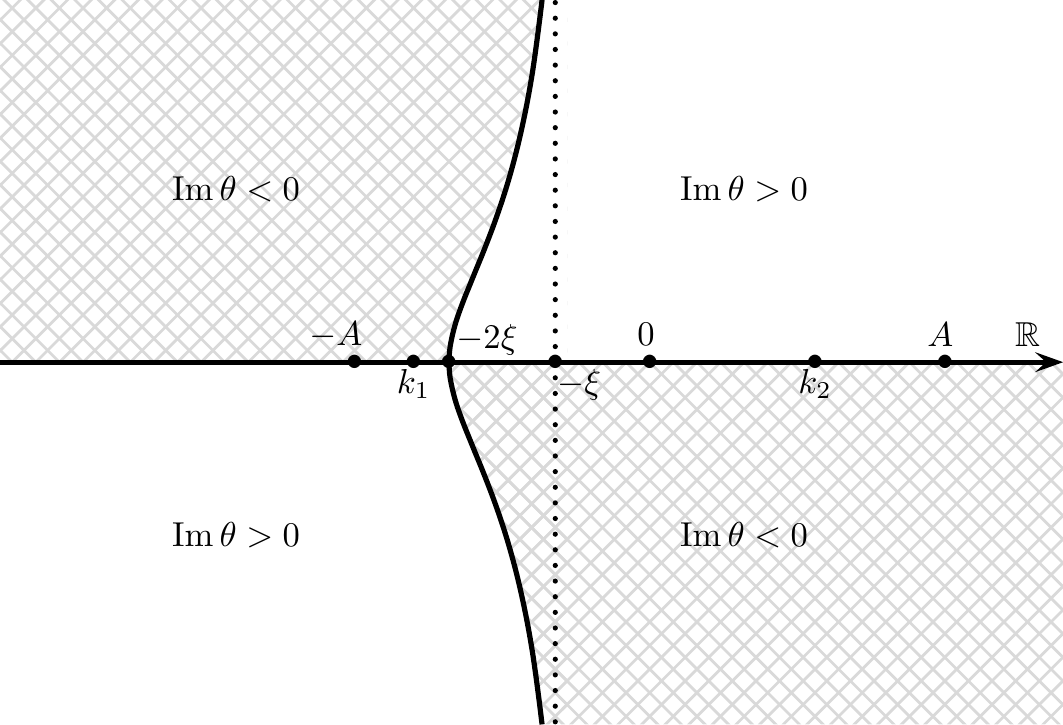}}
\caption{Signature table of $\Im\theta(k,\xi)$ in the central region $0<\xi<A/2$.}
\label{fas_sign_plat}
\end{minipage}
\end{figure}
%-------------------%

%---------------------------------------------------------%
%:s.3.2
%---------------------------------------------------------%
\subsection{Modulated regions $\abs{\xi}\in(A/2,\infty)$}\label{faspwr}
Taking into account the signature structure of $\Im\theta(k,\xi)$ for $-\xi\in(-\infty,-A/2)$ (see Figure \ref{fas_sign_pw}), we will use two different triangular factorizations of the jump matrix $J(x,t,k)$ for $k\in\D{R}\setminus[-A,A]$ (cf. \cites{DIZ, J15, RS21PD}):
\begin{subequations}\label{fasstr}
\begin{equation}\label{fastr1}
J(x,t,k)=\begin{pmatrix}
	1& 0\\
	\frac{r_1(k)\eul^{2\ii t\theta}}{1+ r_1(k)r_2(k)}& 1\\
	\end{pmatrix}
	\begin{pmatrix}
	1+ r_1(k)r_2(k)& 0\\
	0& \frac{1}{1+ r_1(k)r_2(k)}\\
	\end{pmatrix}
	\begin{pmatrix}
	1& \frac{r_2(k)\eul^{-2\ii t\theta}}{1+ r_1(k)r_2(k)}\\
	0& 1\\
	\end{pmatrix},\, k\in(-\infty,k_1),
\end{equation}
and
\begin{equation}\label{fstr2}
J(x,t,k)=\begin{pmatrix}
	1& r_2(k)\eul^{-2\ii t\theta}\\
	0& 1\\
	\end{pmatrix}
	\begin{pmatrix}
	1& 0\\
	r_1(k)\eul^{2\ii t\theta}& 1\\
	\end{pmatrix},\quad k\in(k_1,-A)\cup(A,\infty).
\end{equation}
\end{subequations}
For getting rid of the diagonal factor in \eqref{fastr1}, we introduce the scalar function $\delta(k,k_1)$ as the solution of the following RH problem:
\begin{equation}
\label{fasdrh}
\begin{aligned}
&\delta_+(k,k_1)=\delta_-(k,k_1)(1+r_1(k)r_2(k)),&& k\in(-\infty,k_1),\\
&\delta(k,k_1)\to 1, && k\to\infty.
\end{aligned}
\end{equation}
The jump function $1+r_1(k)r_2(k)$ in \eqref{fasdrh} is, in general, complex valued for $k\in(-\infty,k_1)$, which is an important difference comparing with the problems for the local equations, where it is real \cites{BKS11, BM17, DZ, J15}. The nonzero imaginary part of $1+r_1(k)r_2(k)$ is responsible for the singularity (or zero, depending on the sign) of $\delta$ at the endpoint $k=k_1$, which follows from the integral representation for $\delta(k,k_1)$ (cf. \cite{RS21PD}):
\begin{equation}\label{fasdelta-int}
\delta(k,k_1)=\exp\left\{\frac{1}{2\pi \ii}\int_{-\infty}^{k_1}
\frac{\ln(1+r_1(\zeta)r_2(\zeta))}{\zeta-k}\,\dd\zeta
\right\}.
\end{equation}
Integrating by parts one concludes that
\begin{equation}\label{fasdelta-singular}
\delta(k,k_1)=(k-k_1)^{\ii\nu(k_1)}\eul^{\chi(k,k_1)},
\end{equation}
where
\begin{align}\label{faschi}
\chi(k,k_1)&\coloneqq-\frac{1}{2\pi\ii}\int_{-\infty}^{k_1}\ln(k-\zeta)\dd\ln(1+ r_1(\zeta)r_2(\zeta)),\\
\label{fasnu}
\nu(k_1)&\coloneqq-\frac{1}{2\pi}\ln(1+r_1(k_1)r_2(k_1))
= -\frac{1}{2\pi}\ln|1+r_1(k_1)r_2(k_1)|-
\frac{\ii}{2\pi}\Delta(k_1),\\
\label{fsDelta-arg}
\Delta(k_1)&\coloneqq\int_{-\infty}^{k_1}\dd\arg(1+ r_1(\zeta)r_2(\zeta)).
\end{align}

To obtain the asymptotics in the modulated regions (see Theorem \ref{fasth1pw} below) we need an additional assumption on the spectral functions (cf.~\cite{RS21PD}):

%-------------------%
%:ass 3.1
%-------------------%
\begin{assumption}[on the spectral functions $r_1$ and $r_2$]\label{assumpt}
\begin{equation}\label{fasarg-ass}
\int_{-\infty}^{k}\dd\arg(1+r_1(\zeta)r_2(\zeta))\in(-\pi,\pi),\quad \text{for all}\quad k\in(-\infty,-A).
\end{equation}
\end{assumption}
%-------------------%

This implies that $\abs{\Im\nu(k_1)}<\frac{1}{2}$ and, consequently, $\delta^{\sigma_3}(k,k_1)$ has a square integrable singularity at $k=k_1$. 

%---------------------------------------------------------%
%:s.3.2.1
%---------------------------------------------------------%
\subsubsection{1st transformation}
Using the function $\delta(k,k_1)$ we make the following transformation of $M(x,t,k)$:
\begin{equation}\label{fasm1def}
M^{(1)}(x,t,k)=M(x,t,k)\delta^{-\sigma_3}(k,k_1),\quad k\in\D{C}\setminus\D{R}.
\end{equation}
Then $M^{(1)}(x,t,k)$ solves the following RH problem:
\begin{subequations}
\begin{alignat}{2}
&M^{(1)}_+(x,t,k)=M^{(1)}_-(x,t,k)J^{(1)}(x,t,k),&\quad& k\in\D{R}\setminus\{\pm A\},\\
&M^{(1)}(x,t,k)=I+\ord(k^{-1}), &&k\to\infty,\\
&M^{(1)}(x,t,k)=
	\ord\left((k\pm A)^{-\frac{1}{4}}\right),
	&& k\to\mp A,\\
	\label{fsm1k1}
&M^{(1)}(x,t,k)=\ord
	\begin{pmatrix}
	(k-k_1)^{p} & (k-k_1)^{-p}\\
	(k-k_1)^{p}&
	(k-k_1)^{-p}
	\end{pmatrix},&& k\to k_1,\,\,p\in(-1/2,1/2),
\end{alignat}
\end{subequations}
where the jump matrix $J^{(1)}(x,t,k)$ has the form
\begin{equation}
J^{(1)}=
\begin{cases}
\begin{pmatrix}
1& 0\\
\frac{r_1(k)\delta_-^{-2}(k,k_1)}{1+ r_1(k)r_2(k)}\eul^{2\ii t\theta}& 1\\
\end{pmatrix}
\begin{pmatrix}
1& \frac{r_2(k)\delta_+^{2}(k,k_1)}{1+ r_1(k)r_2(k)}\eul^{-2\ii t\theta}\\
0& 1\\
\end{pmatrix},& k\in(-\infty,k_1),\\
\begin{pmatrix}
1& r_2(k)\delta^2(k,k_1)\eul^{-2\ii t\theta}\\
0& 1\\
\end{pmatrix}
\begin{pmatrix}
1& 0\\
r_1(k)\delta^{-2}(k,k_1)\eul^{2\ii t\theta}& 1\\
\end{pmatrix}, &k\in(k_1,-A)\cup(A,\infty),\\
\begin{pmatrix}
0 & -\delta^{2}(k,k_1)\\
\delta^{-2}(k,k_1) & 0
\end{pmatrix},& k\in(-A,A).
\end{cases}
\end{equation}
Moreover, $M^{(1)}(x,t,k)$ satisfies singularity conditions
at $k=0$:
\begin{subequations}\label{fask=0(1)RH}
\begin{align}
\lim_{\substack{k\to0,\\k\in\D{C}^+}}k\left(M^{(1)}\right)^{[1]}(x,t,k)&=\frac{\gamma_+}{a_{10}\,\delta^{2}(0,k_1)}\eul^{-2Ax}\left(M^{(1)}\right)^{[2]}_+(x,t,0),\\
\lim_{\substack{k\to0,\\k\in\D{C}^-}}k\left(M^{(1)}\right)^{[2]}(x,t,k)&=\frac{\gamma_-\,\delta^{2}(0,k_1)}{a_{20}}\eul^{-2Ax}\left(M^{(1)}\right)^{[1]}_-(x,t,0).
\end{align}
\end{subequations}

%---------------------------------------------------------%
%:s.3.2.2
%---------------------------------------------------------%
\subsubsection{2nd transformation}  \label{sec:pw2}
Now we are able to get off the real axis and to obtain a RH problem which can be approximated, as $t\to+\infty$, by an exactly solvable problem. We assume that the reflection coefficients $r_j(k)$, $j=1,2$ can be continued into a band containing the real axis (this takes place, for example, when $q_0(x)$ converges exponentially fast to its boundary values). 

Define $M^{(2)}(x,t,k)$ as follows (compare with $M^{(2)}$ in \cite{RS21PD} and $M$ in \cite{J15}):
\[
M^{(2)}=M^{(1)}\times
\begin{cases}
\begin{pmatrix}
1& \frac{-r_2(k)\delta^{2}(k,k_1)}{1+r_1(k)r_2(k)}\eul^{-2\ii t\theta}\\
0& 1\\
\end{pmatrix},\, k\in\hat\Omega_1,&
\begin{pmatrix}
1& 0\\
-r_1(k)\delta^{-2}(k,k_1)\eul^{2\ii t\theta}& 1\\
\end{pmatrix},\, k\in\hat\Omega_2,\\
\begin{pmatrix}
1& r_2(k)\delta^2(k,k_1)\eul^{-2\ii t\theta}\\
0& 1\\
\end{pmatrix},\, k\in\hat\Omega_3,&
\begin{pmatrix}
1& 0\\
\frac{r_1(k)\delta^{-2}(k,k_1)}{1+r_1(k)r_2(k)}\eul^{2\ii t\theta}& 1\\
\end{pmatrix}
,\, k\in\hat\Omega_4,\\
\,I,\quad k\in\hat\Omega_0,
\end{cases}
\]
where $\hat\Omega_j$, $j=0,\dots,4$ are displayed in Figure \ref{fas_cont_pw_1}. Let $\hat\Gamma=\cup_{j=1}^4\hat\gamma_j$ be the contour also shown in Figure \ref{fas_cont_pw_1}. Then $M^{(2)}(x,t,k)$ solves the following RH problem: 
\begin{subequations}
\begin{alignat}{2}
&M^{(2)}_+(x,t,k)=M^{(2)}_-(x,t,k)J^{(2)}(x,t,k),&\quad&k\in\hat\Gamma\cup (-A,A),\\
&M^{(2)}(x,t,k)=I+\ord(k^{-1}), &&k\to\infty,\\
&M^{(2)}(x,t,k)=\ord\left((k\pm A)^{-\frac{1}{4}}\right),&& k\to\mp A,\\
&M^{(2)}(x,t,k)=\ord
\begin{pmatrix}
	(k-k_1)^{p} & (k-k_1)^{-p}\\
	(k-k_1)^{p}&
	(k-k_1)^{-p}
\end{pmatrix},&& k\to k_1,\,\,p\in(-1/2,1/2),
\end{alignat}
\end{subequations}
where, using the relations $r_{1\pm}(k)=r_{2\mp}(k)$ and $\theta_+(k)=-\theta_-(k)$ for $k\in(-A,A)$, one finds that
\begin{equation}
\label{fasJ2}
J^{(2)}=
\begin{cases}
\begin{pmatrix}
0& -\delta^{2}(k,k_1)\\
\delta^{-2}(k,k_1)&0
\end{pmatrix},\,k\in (-A,A);\\
\begin{pmatrix}
1& \frac{r_2(k)\delta^{2}(k,k_1)}{1+r_1(k)r_2(k)}\eul^{-2\ii t\theta}\\
0& 1\\
\end{pmatrix}
,\, k\in\hat\gamma_1;&
\begin{pmatrix}
1& 0\\
r_1(k)\delta^{-2}(k,k_1)\eul^{2\ii t\theta}& 1\\
\end{pmatrix}
,\ k\in\hat\gamma_2;
\\
\begin{pmatrix}
1& -r_2(k)\delta^2(k,k_1)\eul^{-2\ii t\theta}\\
0& 1\\
\end{pmatrix}
,\, k\in\hat\gamma_3;&
\begin{pmatrix}
1& 0\\
\frac{-r_1(k)\delta^{-2}(k,k_1)}{1+r_1(k)r_2(k)}\eul^{2\ii t\theta}& 1\\
\end{pmatrix}
,\, k\in\hat\gamma_4.
\end{cases}
\end{equation}
Using the equalities $r_1(k)=\frac{b_+(0)}{a_{10}k}+\ord(1)$ as $k\to 0$ with $k\in\D{C}^+$, $\theta_+(0,\xi)=\ii A\frac{x}{t}$, and $\gamma_+=b_+(0)$ (see Remark \ref{fasbgamma}), direct calculations show that $M^{(2)}(x,t,k)=\ord(1)$ as $k\to 0$, $k\in\hat\Omega_2$. Similarly, it can be shown that $M^{(2)}(x,t,k)=\ord(1)$ as $k\to 0$, $k\in\hat\Omega_3$. Thus the RH problem for $M^{(2)}$, in contrast to that for $M^{(1)}$, does not involve any singularity conditions at $k=0$.

%-------------------%
%:figs 3.3 & 3.4
%-------------------%
\begin{figure}[h]
\begin{minipage}[h]{0.49\linewidth}
\centering{\includegraphics[width=0.96\linewidth]{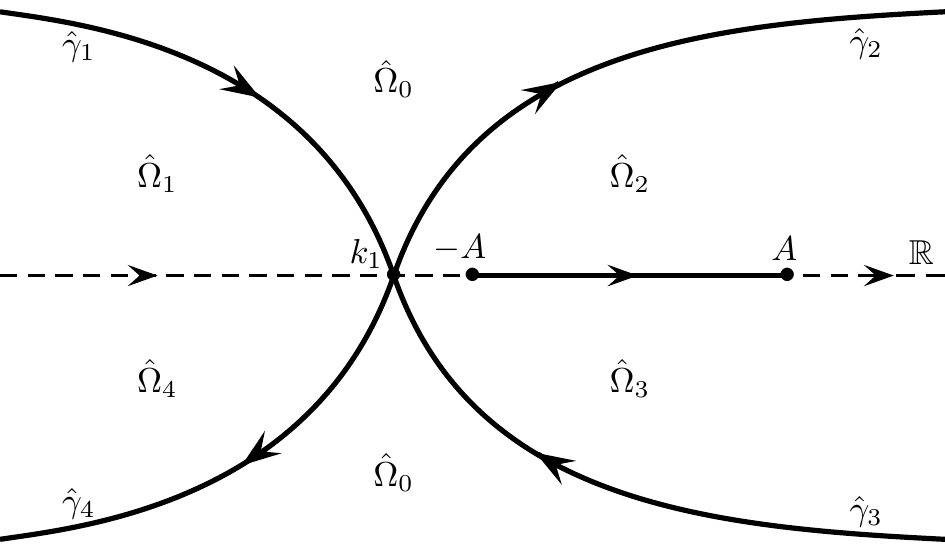}}
\caption{Modulated wave region: contour $\hat\Gamma=\hat\gamma_1\cup\dots\cup\hat\gamma_4$ and domains $\hat\Omega_j$, $j=0,\dots,4$.}\label{fas_cont_pw_1}
\end{minipage}
\hfill
\begin{minipage}[h]{0.49\linewidth}
\centering{\includegraphics[width=0.96\linewidth]{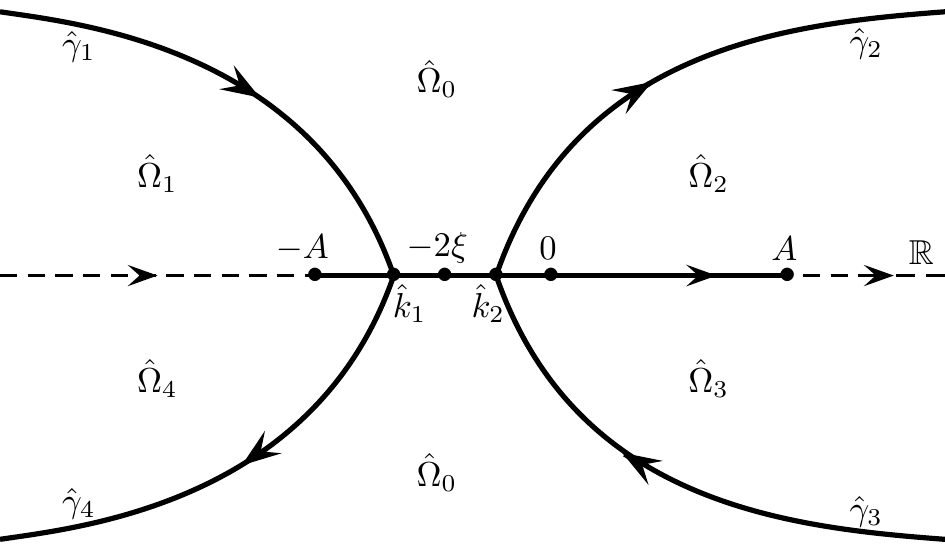}}
\caption{Central region: contour $\hat\Gamma=\hat\gamma_1\cup\dots\cup\hat\gamma_4$ and domains $\hat\Omega_j$, $j=0,\dots,4$.}
\label{fas_cont_cp_1}
\end{minipage}
\end{figure}
%-------------------%

In view of the signature table of $\Im\theta(k,\xi)$ (see Figure \ref{fas_sign_pw}), the jump matrix $J^{(2)}(x,t,k)$ decays to the identity matrix for $k\in\hat\Gamma$, uniformly outside any neighborhood of the stationary phase point $k=k_1$. Arguing as, e.g., in \cite{RS21PD}*{Section 3.2}, we eliminate $\delta(k,\xi)$ in the jump for $k\in(-A,A)$ by introducing the scalar function
\begin{equation}\label{fasF}
F(k,k_1)\coloneqq\exp\left\{-\frac{f(k)}{\pi \ii}\int_{-A}^A\frac{\ln\delta(\zeta,k_1)}{f_-(\zeta)(\zeta-k)}\,\dd\zeta\right\},\quad k\in\D{C}\setminus[-A,A].
\end{equation}
This function $F(k,k_1)$ satisfies the jump condition
\begin{equation}
F_+(k,k_1)F_-(k,k_1)=\delta^{2}(k,k_1),\quad k\in(-A,A),
\end{equation}
and is bounded at $k=\pm A$. In order to recover $q(x,t)$ from the solution of the RH problem, we need the large $k$ asymptotics of $F(k,k_1)$:
\begin{equation}
\begin{split}
F(k,k_1)&= \eul^{\ii F_{\infty}(k_1)}+\ord(k^{-1}),\quad
k\to\infty,\\
F_{\infty}(k_1)&\coloneqq-\frac{1}{\pi}\int_{-A}^{A}\frac{\ln\delta(\zeta,k_1)}{f_-(\zeta)}\,\dd\zeta.
\end{split}
\end{equation}
Substituting \eqref{fasdelta-int} into $F_{\infty}(k_1)$, we have that 
\begin{subequations}\label{fasreimFinf}
\begin{align}
&\Re F_{\infty}(k_1)=-\frac{1}{2\pi^2}\int_{-A}^{A}
	\frac{1}{\sqrt{A^2-\zeta^2}}
	\left(\int_{-\infty}^{k_1}
	\frac{\ln|1+r_1(s)r_2(s)|}{s-\zeta}\,\dd s\right)\dd\zeta,\\
&\Im F_{\infty}(k_1)=-\frac{1}{2\pi^2}\int_{-A}^{A}\frac{1}{\sqrt{A^2-\zeta^2}}\left(\int_{-\infty}^{k_1}\frac{\Delta(s)}{s-\zeta}\,\dd s\right)\dd\zeta,
\end{align}
\end{subequations}
where $\Delta(s)$ is given by \eqref{fsDelta-arg} and $\sqrt{A^2-\zeta^2}>0$.

%---------------------------------------------------------%
%:s.3.2.3
%---------------------------------------------------------%
\subsubsection{3rd transformation}  \label{sec:pw3}
Using $F(k,k_1)$, we define $M^{(3)}(x,t,k)$ as follows
\begin{equation}
M^{(3)}(x,t,k)=\eul^{-\ii F_{\infty}(k_1)\sigma_3}
M^{(2)}(x,t,k)F^{\sigma_3}(k,k_1),
\quad 
k\in\D{C}\setminus\left\{\hat\Gamma\cup [-A,A]\right\}.
\end{equation}
Then $M^{(3)}$ satisfies the following RH problem with constant jump across $(-A,A)$:
\begin{subequations}\label{fasM3}
\begin{alignat}{2}
&M^{(3)}_+(x,t,k)=M^{(3)}_-(x,t,k)J^{(3)}(x,t,k),&\quad&
	k\in\hat\Gamma\cup (-A,A),\\
&M^{(3)}(x,t,k)=I+\ord(k^{-1}), &&k\to\infty,\\
&M^{(3)}(x,t,k)=\ord\left((k\pm A)^{-\frac{1}{4}}\right),&& k\to\mp A,\\
&M^{(3)}(x,t,k)=\ord
	\begin{pmatrix}
	(k-k_1)^{p} & (k-k_1)^{-p}\\
	(k-k_1)^{p}&
	(k-k_1)^{-p}
	\end{pmatrix},&&k\to k_1,\ \ p\in(-1/2,1/2),
\end{alignat}
\end{subequations}
with 
\begin{equation}
\label{fasJ3}
J^{(3)}(x,t,k)=
\begin{cases}
-\ii \sigma_2,&k\in (-A,A),\\
F^{-\sigma_3}(k,k_1)J^{(2)}(x,t,k)F^{\sigma_3}(k,k_1),&k\in\hat\Gamma.
\end{cases}
\end{equation}
Since $F(k,k_1)$ is bounded at $k=0$, we have $M^{(3)}(x,t,k)=\ord(1)$ as $k\to 0$. Thus, similarly to the RH problem for $M^{(2)}$, the RH problem for $M^{(3)}$ does not involve any singularity conditions at $k=0$.

The solution $q(x,t)$ of the Cauchy problem \eqref{fasivp} can be expressed in terms of $M^{(3)}(x,t,k)$ as follows:
\begin{subequations}\label{fsasolM3}
\begin{alignat}{2}\label{fssolM3}
q(x,t)&=2\ii \eul^{-2\ii A^2t+2\ii F_\infty(k_1)}\lim_{k\to\infty}kM_{12}^{(3)}(x,t,k),&\quad& x>0,\\
\label{fssol1M3}
q(x,t)&=-2\ii\eul^{-2\ii A^2t+2\ii\overline{F_\infty(k_1)}}\lim_{k\to\infty}\overline{kM_{21}^{(3)}(-x,t,k)},&& x<0.
\end{alignat}
\end{subequations}

%---------------------------------------------------------%
%:s.3.2.4
%---------------------------------------------------------%
\subsubsection{Model RH problem}
Arguing as in \cite{RS21PD}, the RH problem for $M^{(3)}$ can be approximated by a model RH problem whose contour is $(-A,A)$ and whose jump matrix is constant. Using \eqref{fsasolM3}, we are able to obtain an asymptotics of $q(x,t)$ including at least the first decaying term \cite{RS21PD}. For the sake of brevity, we present here, in Theorem \ref{fasth1pw} below, the leading (non-decaying) terms only. 

%-------------------%
%:thm 3.2
%-------------------%
\begin{theorem}[modulated regions $\abs{\xi}>A/2$]\label{fasth1pw}
Assume that the initial data $q_0(x)$ approaches its boundary values \eqref{fasivp-c} exponentially fast and that the associated spectral functions $a_j(k)$ and $r_j(k)=\frac{b_j(k)}{a_j(k)}$, $j=1,2$ satisfy Assumptions \ref{assumpts} and \ref{assumpt}.

Then the solution $q(x,t)$ of problem \eqref{fasivp} has the following long-time asymptotics along the rays $\xi\equiv\frac{x}{4t}=\const$, uniformly in any compact subset of $\accol{\xi\in\D{R}:\abs{\xi}\in(A/2,+\infty)}$:
\begin{equation}\label{fassolmod}
q(x,t)=\begin{cases}
A\eul^{-2\Im F_{\infty}(k_1(\abs{\xi}))}
\eul^{-2\ii (A^2t-\Re F_{\infty}(k_1(\abs{\xi})))}+E(x,t), &\xi>A/2,\\
-A\eul^{2\Im F_{\infty}(k_1(\abs{\xi}))}
\eul^{-2\ii (A^2t-\Re F_{\infty}(k_1(\abs{\xi})))}+E(x,t), &\xi<-A/2,
\end{cases}
\end{equation}
where $k_1$ and $F_{\infty}(k_1)$ are defined by \eqref{fascrit} and \eqref{fasreimFinf}, respectively, and with error terms $E(x,t)=\ord(t^{-\frac{1}{2}-\Im\nu(k_1(\abs{\xi}))}+t^{-\frac{1}{2}+\Im\nu(k_1(\abs{\xi}))})$.
\end{theorem}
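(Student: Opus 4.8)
The plan is to finish the Deift--Zhou steepest descent analysis begun above by approximating the RH problem for $M^{(3)}$ with an explicitly solvable model, and then to read off $q$ from \eqref{fssolM3}--\eqref{fssol1M3}. In the region $\xi>A/2$ the stationary phase point satisfies $k_1(\xi)<-A$ (at $\xi=A/2$ one has $k_1=-A$, and $k_1$ decreases thereafter), so it lies on the real axis outside the cut. By the signature structure of $\Im\theta$ (Figure \ref{fas_sign_pw}), the jump $J^{(3)}$ on $\hat\Gamma$ decays to the identity exponentially fast off any neighborhood of $k_1$, while the jump on $(-A,A)$ is the constant $-\ii\sigma_2$. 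I would therefore split the construction into a global \emph{outer model} valid away from $k_1$ and a \emph{local parametrix} near $k_1$, following \cite{RS21PD}.

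For the outer model I would take $M^{\model}(k)=\C{E}_2(k)$ from \eqref{fasK}. Indeed, the symmetry \eqref{fassymE12} gives $\C{E}_{2+}=\C{E}_{2-}(-\ii\sigma_2)$, so $\C{E}_2$ has exactly the constant jump $-\ii\sigma_2$ on $(-A,A)$; moreover $\C{E}_2(k)=I+\ord(k^{-1})$ as $k\to\infty$ (since $w\to1$), $\det\C{E}_2\equiv1$, and $\C{E}_2(k)=\ord((k\mp A)^{-\frac14})$ at $k=\pm A$, matching the prescribed weak singularities of $M^{(3)}$. Because these singularities are integrable and are shared by model and true solution, the ratio stays bounded near $\pm A$ and no separate parametrix is needed there; $\C{E}_2$ is also regular at $k=0$ (consistent with the fact, established above, that $M^{(3)}$ has no singularity condition at $k=0$).

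Near $k_1$ I would build a local parametrix $M^{k_1}$ out of parabolic cylinder functions, as is standard for a quadratic stationary point; the factors $\delta^{\pm2}(k,k_1)$ in $J^{(3)}$ (see \eqref{fasJ3}) produce the $(k-k_1)^{\pm\ii\nu(k_1)}$ local structure, with $\nu(k_1)$ given by \eqref{fasnu}, and Assumption \ref{assumpt} guarantees $\abs{\Im\nu}<\frac12$ so the construction is consistent. Forming the ratio $M^{(3)}(M^{\model})^{-1}$ away from $k_1$, glued to $M^{(3)}(M^{k_1})^{-1}$ on a small disk around $k_1$, yields a small-norm RH problem whose jump is $I+\ord(t^{-\frac12-\Im\nu}+t^{-\frac12+\Im\nu})$, the two exponents arising from $\abs{t^{\pm\ii\nu}}=t^{\mp\Im\nu}$ in the matching on the disk boundary. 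Standard small-norm theory \cites{DZ, RS21PD} then gives $M^{(3)}=(I+E)\,M^{\model}$ with $E$ of this order, uniformly for large $k$.

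Finally I would extract the leading term. Since the $k_1$-parametrix contributes only to $E$, the leading part of $\lim_{k\to\infty}kM^{(3)}_{12}$ comes from $M^{\model}=\C{E}_2$: expanding $w(k)=1-\frac{A}{2k}+\ord(k^{-2})$ gives $(\C{E}_2)_{12}=\frac{\ii}{2}(w-\frac1w)=-\frac{\ii A}{2k}+\ord(k^{-2})$, so $\lim_{k\to\infty}kM^{(3)}_{12}=-\frac{\ii A}{2}+\ord(E)$. Substituting into \eqref{fssolM3} and writing $2\ii F_\infty=2\ii\Re F_\infty-2\Im F_\infty$ produces $A\eul^{-2\Im F_\infty(k_1)}\eul^{-2\ii(A^2t-\Re F_\infty(k_1))}$, the claimed leading term for $\xi>A/2$; the case $\xi<-A/2$ follows identically from \eqref{fssol1M3} through $x\mapsto-x$ and complex conjugation, which flips the overall sign and reverses that of $\Im F_\infty$. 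I expect the main obstacle to be the local parametrix at $k_1$: because the jump function $1+r_1r_2$ is genuinely complex-valued (unlike the local NLS case), $\nu(k_1)$ carries a nonzero imaginary part, and the parabolic-cylinder matching together with the verification that the off-$k_1$ error remains uniform up to $k=\infty$ must be carried out carefully, controlling the $\C{E}_2$-conjugated jumps on $\hat\Gamma$ and their behavior as the rays $\hat\gamma_j$ approach $\pm A$.
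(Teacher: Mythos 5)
Your proposal is correct and follows essentially the same route as the paper: after the three transformations already set up in Section \ref{faspwr}, the paper (deferring details to \cite{RS21PD}) approximates $M^{(3)}$ by the constant-jump model problem on $(-A,A)$, whose solution is $\C{E}_2(k)$ by \eqref{fassymE12}, with a parabolic-cylinder parametrix at $k_1$ producing exactly the error exponents $t^{-\frac12\mp\Im\nu(k_1)}$, and then reads off $q$ from \eqref{fsasolM3} as you do. Your identification of the outer model, the origin of the two error exponents, the large-$k$ expansion of $(\C{E}_2)_{12}$, and the conjugation symmetry handling $\xi<-A/2$ all match the paper's argument.
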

%-------------------%

%-------------------%
%:rem 3.3
%-------------------%
\begin{remark}
In contrast to the plane wave regions for problems for the defocusing NLS equation \cites{B89, EGGK, IU86, J15}, the modulus of the main term in  \eqref{fassolmod} depends on the direction $\xi$. Notice that the absolute value of the main term of the asymptotics in the plane wave regions \cite{RS21PD} and the so-called ``modulated constant'' regions \cites{RSs, RS21CIMP} in problems for the NNLS equation with nonzero symmetric and step-like boundary conditions also depends on the direction $\xi$.
\end{remark}
%-------------------%

%---------------------------------------------------------%
%:s.3.3
%---------------------------------------------------------%
\subsection{Central region ($\abs{\xi}\in(0,A/2)$)}\label{fas_centr_reg}
For this region, in contrast to the modulated regions (see Section \ref{faspwr}), the sign-changing critical point $k=-2\xi$ lies on the cut $(-A,A)$ (see Figure \ref{fas_sign_plat}). Since $\Im\theta(k,\xi)$ does not vanish on the cut ($\pm\Im\theta_\pm(k,\xi)<0$ for $k\in(-A,-2\xi)$ and $\pm\Im\theta_\pm(k,\xi)>0$ for $k\in(-2\xi,A)$), we are able to obtain the asymptotics with exponential precision (see \cite{IU86} and \cite{J15}*{Section 5.5}). Moreover, no additional conditions on the winding of the argument are needed, because in the central region there is no need to deal with a model problem on the cross.

%---------------------------------------------------------%
%:s.3.3.1
%---------------------------------------------------------%
\subsubsection{1st transformation}

The first transformation is similar to that in the modulated region, but with $\delta(k,-A)$ instead of $\delta(k,k_1)$ (cf. \eqref{fasm1def}):
\begin{equation}\label{fasm1defpl}
M^{(1)}(x,t,k)=M(x,t,k)\delta^{-\sigma_3}(k,-A),\quad k\in\D{C}\setminus\D{R}.
\end{equation}
Then $M^{(1)}(x,t,k)$ solves a RH problem similar to that in the modulated regions, but with, in general, a \emph{strong} singularity at $k=-A$. The form of this singularity depends on whether the quantity $1+r_1(-A)r_2(-A)$ is equal to zero or not (see Remark \ref{faszerr1r2}). Here we only consider the most complicated case, when $1+r_1(-A)r_2(-A)=0$.

Using the results of \cite{G66}*{Sections 8.1 and 8.5} about the behavior of Cauchy-type integrals at the end points and the relation $\ln(-A)=\ln A+\ii\pi$, we have that 
\begin{equation}
\frac{1}{2\pi\ii}\int_{-\infty}^{-A}\frac{\ln\frac{\zeta+A}{\zeta}}{\zeta-k}\,\dd\zeta=\frac{1}{2\pi\ii}\ln A\cdot\ln(k+A)+\frac{1}{4\pi\ii}\ln^2(k+A)+\Phi_{-A}(k),
\end{equation}
where $\Phi_{-A}(k)$ is analytic in a neighborhood of $k=-A$. Since 
\[
\int_{-\infty}^{-A}\dd\arg(1+r_1(\zeta)r_2(\zeta))=
\int_{-\infty}^{-A}\dd\arg\frac{\zeta+A}{\zeta}(1+r_1(\zeta)r_2(\zeta))
\]
and $\ln^2(k+A)=\ln^2|k+A|+\arg^2(k+A)+2\ii\arg(k+A)\cdot\ln(k+A)$, we obtain the following behavior of $\delta(k,-A)$ at $k=-A$:
\begin{equation}\label{fasd-A}
\delta(k,-A)=(k+A)^{\frac{1}{2\pi}(\Delta(-A)+\arg(k+A))}\delta_{-A}(k),
\end{equation}
where $\Delta(-A)$ is given by \eqref{fsDelta-arg} and $\delta_{-A}(k)$ is bounded at $k=-A$. Then $M^{(1)}$ has the following behavior at $k=-A$:
\begin{equation}\label{fasM1-A}
M^{(1)}(x,t,k)=\ord
\begin{pmatrix}
(k+A)^{-\frac{1}{2\pi}(\Delta(-A)+\arg(k+A))-\frac{1}{4}} & 
(k+A)^{\frac{1}{2\pi}(\Delta(-A)+\arg(k+A))-\frac{1}{4}}\\
(k+A)^{-\frac{1}{2\pi}(\Delta(-A)+\arg(k+A))-\frac{1}{4}}& (k+A)^{\frac{1}{2\pi}(\Delta(-A)+\arg(k+A))-\frac{1}{4}}
\end{pmatrix},\quad k\to-A.
\end{equation}

%---------------------------------------------------------%
%:s.3.3.2
%---------------------------------------------------------%
\subsubsection{2nd transformation}

Further, we define $M^{(2)}(x,t,k)$ as in Section \ref{sec:pw2} for the modulated wave case, but with domains $\hat\Omega_j$, $j=0,\dots,4$ displayed in Figure \ref{fas_cont_cp_1}. In that case (see Figure \ref{fas_cont_cp_1}) the points of intersection $\hat k_1$ and $\hat k_2$ of the real axis with $\hat\gamma_1$ and $\hat\gamma_4$, then with $\hat\gamma_2$ and $\hat\gamma_3$ are simply chosen such that $-A<\hat k_1<-2\xi<\hat k_2<0$. Since $1+r_1(k)r_2(k)$ has a simple zero at $k=-A$, choosing $\arg(k+A)\in(2\pi,3\pi)$ for $k\in\D{C}^+$ in the second column of $M^{(1)}$ as $k\to-A$ and $\arg(k+A)\in(-3\pi,-2\pi)$ for $k\in\D{C}^-$ in the first column of $M^{(1)}$ as $k\to-A$ (see \eqref{fasM1-A}) we obtain the behavior \eqref{fasM1-A} for $M^{(2)}$ with $\arg(k+A)\in(-\pi,\pi)$. Moreover, similarly to Section \ref{faspwr}, $k=0$ lies on the boundary of the domains $\hat\Omega_2$ and $\hat\Omega_3$ and thus $M^{(2)}(x,t,k)$ turns to be bounded at $k=0$ as well.

%---------------------------------------------------------%
%:s.3.3.3
%---------------------------------------------------------%
\subsubsection{3rd transformation}

We define $M^{(3)}(x,t,k)$ as in Section \ref{sec:pw3}, but with $F(k,-A)$ instead of $F(k,k_1)$. From \eqref{fasd-A} and \cite{G66}*{Section 8.6} we conclude that $F(k,-A)$ behaves at $k=-A$ as follows:
\begin{equation}\label{fasF-A}
F(k,-A)=(k+A)^{\frac{1}{2\pi}(\Delta(-A)+\arg(k+A))}F_{-A}(k),
\end{equation}
where $F_{-A}(k)$ is bounded at $k=-A$. Therefore, $M^{(3)}(x,t,k)=\ord\bigl((k+A)^{-\frac{1}{4}}\bigr)$ as $k\to-A$. The jump matrix $J^{(3)}$ associated with $M^{(3)}$ is defined similarly to  \eqref{fasJ3}, with $F(k,k_1)$ replaced by $F(k,-A)$ and with the 
contour $\hat\Gamma$ displayed in Figure \ref{fas_cont_cp_1}.

%---------------------------------------------------------%
%:s.3.3.4
%---------------------------------------------------------%
\subsubsection{Model RH problem}
Taking into account that $J^{(3)}(x,t,k)$, $k\in\hat\Gamma$  (see Figure \ref{fas_cont_cp_1}) approaches exponentially fast the identity matrix (as $t\to+\infty$), uniformly with respect to $k\in\hat\Gamma$, we arrive at the following asymptotics for $q(\pm x,t)$:
\begin{subequations}\label{fsasolM4}
\begin{alignat}{2}\label{fssolM4}
q(x,t)&=2\ii\eul^{-2\ii A^2t+2\ii F_\infty(-A)}\lim_{k\to\infty}kM_{12}^\model(k)+\ord(\eul^{-ct}),&\quad& x>0,\,\,t\to+\infty,\\
\label{fssol1M4}
q(-x,t)&=-2\ii\eul^{-2\ii A^2t+2\ii \overline{F_\infty(-A)}}\lim_{k\to\infty}\overline{kM_{21}^\model(k)}+\ord(\eul^{-ct}),&& x>0,\,\,t\to+\infty,
\end{alignat}
\end{subequations}
with some $c>0$, and where $M^\model(k)$ is analytic in $\D{C}\setminus[-A,A]$ and solves the following RH problem with constant jump matrix across the contour $(-A,A)$:
\begin{subequations}\label{fasM4}
\begin{alignat}{2}
&M_+^\model(k)=-\ii M_-^\model(k)\sigma_2,&\quad&k\in (-A,A),\\
&M^\model(k)=I+\ord(k^{-1}), &&k\to\infty,\\
&M^\model(k)=\ord\left((k\pm A)^{-\frac{1}{4}}\right),&& k\to\mp A.
\end{alignat}
\end{subequations}
From \eqref{fassymE12} it follows that $M^\model(k)=\C{E}_2(k)$. Combining this with \eqref{fsasolM4}, we arrive at

%-------------------%
%:thm 3.4
%-------------------%
\begin{theorem}[unmodulated regions $0<\abs{\xi}<A/2$]\label{fasth2cpr}
Assume that the initial data $q_0(x)$ approaches exponentially fast its boundary values \eqref{fasivp-c} and that the associated spectral functions $a_j(k)$ and $r_j(k)=\frac{b_j(k)}{a_j(k)}$, $j=1,2$ satisfy Assumptions \ref{assumpts}. 

Then the solution $q(x,t)$ of problem \eqref{fasivp} has the following long-time asymptotics along the rays $\xi=\frac{x}{4t}=\const$, uniformly in any compact subset of $\accol{\xi\in\D{R}:\abs{\xi}\in(0,A/2)}$:
\begin{equation}\label{fassolcpr12}
q(x,t)=\begin{cases}
A\eul^{-2\Im F_{\infty}(-A)}
\eul^{-2\ii (A^2t-\Re F_{\infty}(-A))}+\ord(\eul^{-ct}), &0<\xi<A/2,\\
-A\eul^{2\Im F_{\infty}(-A)}
\eul^{-2\ii (A^2t-\Re F_{\infty}(-A))}+\ord(\eul^{-ct}), &-A/2<\xi<0,
\end{cases}
\end{equation}
with some $c>0$ independent of $\xi$. Here $F_{\infty}(-A)$ is given by \eqref{fasreimFinf} with $k_1=-A$.
\end{theorem}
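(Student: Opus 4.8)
The plan is to carry out the chain of transformations $M\mapsto M^{(1)}\mapsto M^{(2)}\mapsto M^{(3)}$ built above for the central region, to show that the RH problem for $M^{(3)}$ coincides, up to an exponentially small error, with the explicitly solvable model problem \eqref{fasM4}, and finally to extract $q(\pm x,t)$ from the reconstruction formulas \eqref{fssolM4}--\eqref{fssol1M4}. The feature separating this region from the modulated one is that the sign-changing point $k=-2\xi$ lies \emph{on} the cut $(-A,A)$: by \eqref{fascut} the quantity $\Im\theta_\pm(k,\xi)$ keeps a definite sign on each of $(-A,-2\xi)$ and $(-2\xi,A)$ and vanishes on the cut only at $k=-2\xi$, where the jump is the \emph{constant} matrix $-\ii\sigma_2$. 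Hence no stationary phase point survives off the cut, no local (cross) parametrix is required, and --- in contrast with Theorem \ref{fasth1pw} --- the remainder will be exponentially rather than algebraically small. This is also why only Assumptions \ref{assumpts}, and not the winding Assumption \ref{assumpt}, are needed here.

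First I would establish the decay of the jump on the deformed contour. The lenses $\hat\Omega_j$ of Figure \ref{fas_cont_cp_1} are opened so that on each arc of $\hat\Gamma$ the accompanying exponential $\eul^{\pm2\ii t\theta}$ decays as $t\to+\infty$, uniformly. Away from $k=-A$ the factors $\delta^{\pm2}(k,-A)$, $F^{\pm2}(k,-A)$ and the continued reflection coefficients $r_j$ are bounded on $\hat\Gamma$, so the off-diagonal entries of $J^{(3)}(x,t,k)-I$ are $\ord(\eul^{-ct})$ uniformly, with $c>0$ depending only on the compact $\xi$-interval. A standard small-norm analysis of the singular integral equation associated with $M^{(3)}$ then gives $M^{(3)}(x,t,k)=(I+\ord(\eul^{-ct}))M^\model(k)$ near $k=\infty$.

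Next I would solve \eqref{fasM4} explicitly. By the symmetry \eqref{fassymE12} with $j=2$, the matrix $\C{E}_2(k)$ of \eqref{fasK} satisfies exactly the constant jump $\C{E}_{2+}=-\ii\,\C{E}_{2-}\sigma_2$ across $(-A,A)$; by \eqref{fasbrcut} it tends to $I$ as $k\to\infty$, and the factor $w(k)=\bigl((k-A)/(k+A)\bigr)^{1/4}$ produces precisely the admissible $\ord((k\mp A)^{-1/4})$ behaviour at $k=\pm A$, so Liouville's theorem yields $M^\model=\C{E}_2$. Expanding $w(k)=1-\tfrac{A}{2k}+\ord(k^{-2})$ gives $\lim_{k\to\infty}k(\C{E}_2)_{12}(k)=-\tfrac{\ii A}{2}$ and $\lim_{k\to\infty}k(\C{E}_2)_{21}(k)=\tfrac{\ii A}{2}$. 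Substituting into \eqref{fssolM4}--\eqref{fssol1M4} and writing $\eul^{2\ii F_\infty(-A)}=\eul^{-2\Im F_\infty(-A)}\eul^{2\ii\Re F_\infty(-A)}$ produces the two lines of \eqref{fassolcpr12}, the exponentially small remainder being inherited from the small-norm estimate.

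The main obstacle is the endpoint $k=-A$ in the resonant case $1+r_1(-A)r_2(-A)=0$ retained in Section \ref{fas_centr_reg}, where the first transformation produces the \emph{strong} singularity \eqref{fasM1-A}, worse than the $\ord((k\pm A)^{-1/4})$ tolerated by the model. The delicate step is the branch bookkeeping of $\arg(k+A)$ in the second transformation: taking $\arg(k+A)\in(2\pi,3\pi)$ for the second column in $\D{C}^+$ and $\arg(k+A)\in(-3\pi,-2\pi)$ for the first column in $\D{C}^-$ restores the benign $\arg(k+A)\in(-\pi,\pi)$ form, after which \eqref{fasF-A} shows that the conjugation by $F^{\sigma_3}(k,-A)$ leaves $M^{(3)}=\ord((k+A)^{-1/4})$, matching $\C{E}_2$. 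One must then verify that the small-norm estimate survives uniformly up to this singular endpoint, i.e.\ that the exponential decay of $J^{(3)}-I$ dominates the integrable singularity of $\delta^{\pm2}$ and $F^{\pm2}$ at $-A$; this holds because $\Im\theta$ retains a strict sign on the lens arcs right up to $k=-A$.
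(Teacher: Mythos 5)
Your overall route coincides with the paper's: the three transformations built on $\delta(k,-A)$ and $F(k,-A)$, uniform exponential decay of the jump on $\hat\Gamma$, identification of the model problem \eqref{fasM4} as $\C{E}_2(k)$ via \eqref{fassymE12} and Liouville, and reconstruction through \eqref{fssolM4}--\eqref{fssol1M4}; your expansion of $w(k)$ and the resulting two lines of \eqref{fassolcpr12} are correct. However, there is a genuine gap: you never address the singularity conditions at $k=0$. By item (A2) of Assumptions \ref{assumpts}, $a_{1+}$ and $a_{2-}$ have simple zeros at $k=0$, so the basic RH problem, and hence the problem for $M^{(1)}$, carries the nontrivial conditions \eqref{fask=0bRH} and \eqref{fask=0(1)RH}, while the model problem \eqref{fasM4} you compare with carries no condition at $k=0$ at all. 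Before any small-norm argument can be run, one must show that the lens opening \emph{removes} these conditions, i.e.\ that $M^{(2)}$ (hence $M^{(3)}$, hence the error matrix $M^{(3)}(M^\model)^{-1}$) is bounded at $k=0$. This is precisely why the contour of Figure \ref{fas_cont_cp_1} is chosen with $-2\xi<\hat k_2<0$, so that $k=0$ lies on the boundary of $\hat\Omega_2$ and $\hat\Omega_3$: there the lens factors contain $r_1(k)=\frac{b_+(0)}{a_{10}k}+\ord(1)$, and since $\eul^{2\ii t\theta_+(0,\xi)}=\eul^{-2Ax}$ and $\gamma_+=b_+(0)$, the pole of the lens factor exactly cancels the singular behavior prescribed for $M^{(1)}$ (the ``direct calculation'' of Section \ref{sec:pw2}, invoked again in Section \ref{fas_centr_reg}). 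This step is not cosmetic: had $k=0$ ended up on the boundary of $\hat\Omega_0$, the singularity conditions would survive, the relevant model problem would be \eqref{fasM_tilde} with \eqref{fask=0(tilde)RH}, and the asymptotics would be the $x$-dependent tanh-type formula \eqref{fassoltrans} of Theorem \ref{fasthtrans} instead of \eqref{fassolcpr12}. An argument that never mentions $k=0$ cannot distinguish these two outcomes.

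A secondary inaccuracy: your last paragraph worries about the decay of $J^{(3)}-I$ ``right up to $k=-A$'', but in the paper's construction the arcs $\hat\gamma_j$ meet the real axis only at $\hat k_1$ and $\hat k_2$, which stay at a fixed distance from $\pm A$ and from $-2\xi$ when $\xi$ runs over a compact subset of $(0,A/2)$; this separation is what makes $c>0$ uniform, and no competition between exponential decay and the singularities of $\delta^{\pm2}$, $F^{\pm2}$ at $-A$ actually arises on $\hat\Gamma$. The endpoint $k=-A$ is instead handled by observing that the error matrix has \emph{no jump} across $(-A,A)$ (the constant jumps of $M^{(3)}$ and $M^\model$ cancel), so its $\ord\bigl((k+A)^{-1/2}\bigr)$ behavior, cf.\ \eqref{fasM_err}, is an isolated singularity of removable type that does not obstruct the small-norm and uniqueness arguments; your branch bookkeeping for $\arg(k+A)$ in the resonant case $1+r_1(-A)r_2(-A)=0$ is the right ingredient for obtaining that $\ord\bigl((k+A)^{-1/4}\bigr)$ bound for $M^{(3)}$, and matches the paper.
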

%-------------------%

%-------------------%
%:rem 3.5
%-------------------%
\begin{remark}
The asymptotics in the central (unmodulated) regions is established without additional restrictions on the winding of the argument of the spectral data (cf. Theorem \ref{fasth1pw} and, e.g., \cites{RS21PD, RS21CIMP}). To the best of our knowledge, it is the first discovered zone for nonlocal integrable equations where the asymptotics of the solution does not depend on the behavior of the argument of a dedicated spectral function.
\end{remark}
%-------------------%

%-------------------%
%:rem 3.6
%-------------------%
\begin{remark}
The asymptotics of $q(x,t)$ for $\xi\in(-A/2,0)$ and $\xi\in(0,A/2)$ does not depend on the direction $\xi$. However, both $|q(x,t)|$ and $\arg q(x,t)$ depend on the initial data through $F_\infty(-A)$. 
	
The central region can be compared with the central plateau zone for the defocusing NLS equation, where the asymptotics is also obtained with exponential precision, but the modulus of the solution does not depend on the initial data \cites{B89, EGGK, IU86, J15}.
\end{remark}
%-------------------%

%-------------------%
%:rem 3.7
%-------------------%
\begin{remark}
Since $k_1(\frac{A}{2})=-A$, the main terms in the unmodulated regions, see \eqref{fassolcpr12}, match those in the modulated regions (see \eqref{fassolmod}) at $\xi=\pm\frac{A}{2}$.
\end{remark}
%-------------------%

%-------------------%
%:rem 3.8
%-------------------%
\begin{remark}
The asymptotic formulas \eqref{fassolcpr12} do not match as $\xi\to \pm0$. However, in the central  region, the solution $q(x,t)$ can approach a tanh-like function as $t\to+\infty$ (see Theorem \ref{fasthtrans} below).
\end{remark}
%-------------------%

%---------------------------------------------------------%
%:s.3.4
%---------------------------------------------------------%
\subsection{Transition at $\xi=0$}\label{fastrans}
In this section we analyse the asymptotics of the solution as $\xi\to\pm0$. For this, we consider $(x,t)$ with $x=x_0>0$ \emph{fixed} and $t\to+\infty$.

%---------------------------------------------------------%
%:s.3.4.1
%---------------------------------------------------------%
\subsubsection{First transformations}
We perform three transformations of the basic RH problem similar to those made in Section \ref{fas_centr_reg}. However, since $\xi\to+0$, we choose the contour $\hat\Gamma$ (see Figure \ref{fas_cont_tr_1}) such that its points of intersection $\hat{k}_1$ and $\hat{k}_2$ with the real axis satisfy $-A<\hat{k}_1<0<\hat{k}_2<A$.
%-------------------%
%:fig 3.5
%-------------------%
\begin{figure}[h]
\begin{minipage}[h]{0.99\linewidth}
\centering{\includegraphics[width=0.5\linewidth]{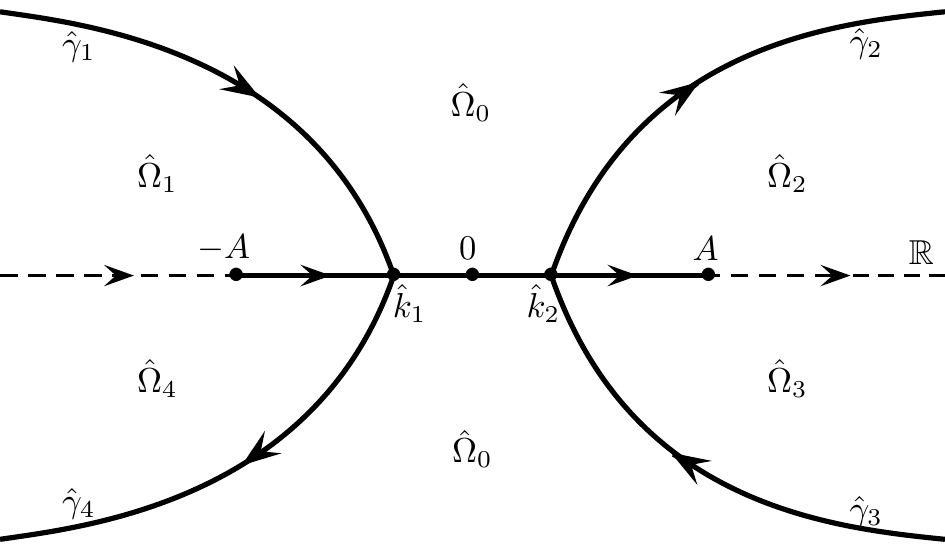}}
\caption{Transition region: contour $\hat\Gamma=\hat\gamma_1\cup\dots\cup\hat\gamma_4$ and domains $\hat\Omega_0,\dots,\hat\Omega_4$.}
\label{fas_cont_tr_1}
\end{minipage}
\end{figure}
%-------------------%

In contrast to the cases presented in Sections
\ref{faspwr} and \ref{fas_centr_reg}, now the point $k=0$ lies on the boundary of $\hat\Omega_0$.
It follows that the RH problems for both $M^{(2)}(x,t,k)$ and 
$M^{(3)}(x,t,k)$ involve singularity conditions at $k=0$;
particularly, these conditions for  $M^{(3)}$ read as follows:
\begin{subequations}\label{fask=0M(3)RH}
\begin{align}
&\lim_{\substack{k\to 0,\\k\in\D{C}^+}}
	k\bigl(M^{(3)}\bigr)^{[1]}(x,t,k)
	=\frac{\gamma_+\,F_+^2(0,-A)}{a_{10}\,\delta^{2}(0,-A)}
	\eul^{-2Ax}
	\bigl(M^{(3)}\bigr)^{[2]}_+(x,t,0),\\
&\lim_{\substack{k\to 0,\\k\in\D{C}^-}}
	k\bigl(M^{(3)}\bigr)^{[2]}(x,t,k)
	=\frac{\gamma_-\,\delta^{2}(0,-A)}{a_{20}\,F_-^{2}(0,-A)}
	\eul^{-2Ax}
	\bigl(M^{(3)}\bigr)^{[1]}_-(x,t,0).
\end{align}
\end{subequations}

%---------------------------------------------------------%
%:s.3.4.2
%---------------------------------------------------------%
\subsubsection{Model RH problem}
The solution $M^{(3)}(x,t,k)$ of the RH problem relative to the contour $\hat\Gamma\cup(-A,A)$ (see Figure \ref{fas_cont_tr_1}) can be approximated by the solution $M^\model(x,k)$ of a model problem, which is as follows (cf.~\eqref{fasMsol} and \eqref{fask=0sol}):
\begin{subequations}\label{fasM_tilde}
\begin{alignat}{2}
M_+^\model(x,k)&=-\ii M_-^\model(x,k)\sigma_2,&\quad&k\in (-A,A),\\
M^\model(x,k)&=I+\ord(k^{-1}), &&k\to\infty,\\
M^\model(x,k)&=\ord\bigl((k\pm A)^{-\frac{1}{4}}\bigr),&& k\to\mp A,
\end{alignat}
\end{subequations}
with singularity conditions at $k=0$:
\begin{subequations}\label{fask=0(tilde)RH}
\begin{align}
&\lim_{\substack{k\to 0,\\k\in\D{C}^+}}
kM^{\model[1]}(x,k)=\frac{\gamma_+\,F_+^2(0,-A)}{a_{10}\,\delta^{2}(0,-A)}\eul^{-2Ax}M^{\model[2]}_+(x,0),\\
&\lim_{\substack{k\to 0,\\k\in\D{C}^-}}kM^{\model[2]}(x,k)=\frac{\gamma_-\,\delta^{2}(0,-A)}{a_{20}\,F_-^{2}(0,-A)}\eul^{-2Ax}M_-^{\model[1]}(x,0).
\end{align}
\end{subequations}
Indeed, writing
\begin{equation}\label{fasM^3trans}
M^{(3)}(x,t,k)=M^\err(x,t,k)M^\model(x,k),
\end{equation}
$M^\err$ satisfies the following RH problem on the contour $\hat\Gamma$:
\begin{subequations}\label{fasM_err}
\begin{alignat}{2}
M^\err_+(x,t,k)&= M^\err_-(x,t,k)J^\err(x,t,k),&\quad&k\in\hat\Gamma,\\
M^\err(x,k)&=I+\ord(k^{-1}), &&k\to\infty,\\
M^\err(x,k)&=\ord\left((k\pm A)^{-\frac{1}{2}}\right),&& k\to\mp A,
\end{alignat}
\end{subequations}
where $J^\err(x,t,k)$, $k\in\hat\Gamma$ can be uniformly estimated with exponentially small error for large $t$:
\begin{equation}\label{fasJerr}
J^\err(x,t,k)=M^\model(x,k)(I+\ord(\eul^{-ct}))(M^\model)^{-1}(x,k),\quad t\to+\infty,
\end{equation}
with some $c>0$ which does not depend on $x$. It follows that for all $x$ such that $2A+\ii d(A)\eul^{-2Ax}\neq 0$ (see \eqref{fasMssol-1}),
\begin{equation}\label{fasMerr}
M_1^\err(x,t)\coloneqq\lim_{k\to\infty}k\bigl(M^\err(x,t,k)-I\bigr)
=\frac{\ord(\eul^{-ct})}{2A+\ii d(A)\eul^{-2Ax}},\quad t\to+\infty,
\end{equation}
where $\ord(\eul^{-ct})$ is independent of $x$ and
\begin{equation}\label{fasdA}
d(A)\coloneqq\frac{\gamma_+\,F_+^2(0,-A)}{a_{10}\,\delta^{2}(0,-A)},
\end{equation}
with $\delta(k,-A)$ and $F(k,-A)$ given by \eqref{fasdelta-int} and \eqref{fasF}, respectively. From \eqref{fasM_tilde} and \eqref{fasMerr} we conclude that $q(x,t)$ and $q(-x,t)$ can be found in terms of the solution $M^\model(x,k)$ as follows:
\begin{subequations}\label{fsasolM_tilde}
\begin{alignat}{2}\label{fssolM_tilde}
q(x,t)&=2\ii \eul^{-2\ii A^2t+2\ii F_\infty(-A)}\lim_{k\to\infty}k\tilde{M}_{12}(x,k)+\ord(\eul^{-ct}),&\quad& x>0,\,\,t\to+\infty,\\
\label{fssol1M_tilde}
q(-x,t)&=-2\ii \eul^{-2\ii A^2t+2\ii\overline{F_\infty(-A)}}\lim_{k\to\infty}\overline{k\tilde{M}_{21}(x,k)}+\ord(\eul^{-ct}),&& x>0,\,\,t\to+\infty.
\end{alignat}
\end{subequations}
Then, arguing as in Section \ref{fasonesol}, we can explicitly solve the RH problem for $M^\model(x,k)$ and thus arrive at

%-------------------%
%:thm 3.8
%-------------------%
\begin{theorem}[transition at $\xi=0$]\label{fasthtrans}
Assume that the initial data $q_0(x)$ approaches exponentially fast its boundary values \eqref{fasivp-c} and that the associated spectral functions $a_j(k)$ and $r_j(k)=\frac{b_j(k)}{a_j(k)}$, $j=1,2$ satisfy Assumptions \ref{assumpts}. 

Then the solution $q(x,t)$ of problem \eqref{fasivp} has the following asymptotics  as $t\to+\infty$ along the rays $x=\const$, excluding $x=0$ and also $x=x^\prime\coloneqq\frac{1}{2A}\ln\frac{-\ii d(A)}{2A}$ if $x^\prime$ is real and positive, and $x=x^{\prime\prime}\coloneqq-\frac{1}{2A}\ln\frac{\ii\overline{d(A)}}{2A}$ if $x^{\prime\prime}$ is real and negative:
\begin{equation}\label{fassoltrans}
q(x,t)=\begin{cases}
A\eul^{-2\Im F_{\infty}(-A)}\eul^{-2\ii (A^2t-\Re F_{\infty}(-A))}\cdot\frac{2A-\ii d(A)\eul^{-2Ax}}{2A+\ii d(A)\eul^{-2Ax}}+\ord(\eul^{-ct}), & x>0,\\
-A\eul^{2\Im F_{\infty}(-A)}\eul^{-2\ii (A^2t-\Re F_{\infty}(-A))}\cdot\frac{2A\eul^{-2Ax}+\ii \overline{d(A)}}
{2A\eul^{-2Ax}-\ii\overline{d(A)}}+\ord(\eul^{-ct}), & x<0,
\end{cases}
\end{equation}
with some $c>0$ independent of $x$. Here $F_{\infty}(-A)$ and $d(A)$ are given by \eqref{fasreimFinf} and \eqref{fasdA}, respectively.
\end{theorem}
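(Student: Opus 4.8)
The plan is to leverage the factorization \eqref{fasM^3trans} together with the error bound \eqref{fasMerr}: since $M^\err$ differs from the identity by terms that are $\ord(\eul^{-ct})$ uniformly in $x$ on compacta avoiding the zeros of $2A+\ii d(A)\eul^{-2Ax}$, the representation formulas \eqref{fssolM_tilde}--\eqref{fssol1M_tilde} reduce everything to solving the model problem \eqref{fasM_tilde}--\eqref{fask=0(tilde)RH} explicitly and reading off $\lim_{k\to\infty}kM^\model_{12}(x,k)$ and $\lim_{k\to\infty}\overline{kM^\model_{21}(x,k)}$.

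First I would solve the model problem by repeating the computation of Section \ref{fasonesol} verbatim. Indeed \eqref{fasM_tilde}--\eqref{fask=0(tilde)RH} coincide in form with the reflectionless problem \eqref{fasMsol}--\eqref{fask=0sol}: the same constant jump $-\ii\sigma_2$ on $(-A,A)$, the same normalization and quarter-power endpoint behavior, and the same type of singularity conditions at $k=0$, the only change being that the constant $d_0=\gamma_+/a_{10}$ is replaced by $d(A)$ from \eqref{fasdA}. Here one should check that the two $k=0$ conditions remain consistent with the $(d,-d)$ sign pattern of \eqref{fask=0sol}: using $\gamma_+=\gamma_-$, $a_{20}=-a_{10}$, and the identity $F_+(0,-A)F_-(0,-A)=\delta^2(0,-A)$ (the jump relation for $F$), the coefficient in the second condition of \eqref{fask=0(tilde)RH} indeed equals $-d(A)$. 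Consequently the solution is given by \eqref{fasMssol-1} with $d_0$ replaced by $d(A)$:
\[
M^\model(x,k)=\left(I+\frac{\mu(x)}{k}\begin{pmatrix}-1&-1\\1&1\end{pmatrix}\right)\C{E}_2(k),\qquad \mu(x)=\frac{Ad(A)\eul^{-2Ax}}{2A+\ii d(A)\eul^{-2Ax}},
\]
which is well defined precisely away from the ray $x=x'$.

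Next I would extract the asymptotics. Expanding $w(k)=1-\frac{A}{2k}+\ord(k^{-2})$ in \eqref{fasK} gives $\C{E}_2(k)=I+\frac{A}{2k}\sigma_2+\ord(k^{-2})$ as $k\to\infty$, whence a one-line computation yields $\lim_{k\to\infty}kM^\model_{12}(x,k)=-\frac{\ii A}{2}-\mu(x)$ and $\lim_{k\to\infty}kM^\model_{21}(x,k)=\frac{\ii A}{2}+\mu(x)$. Substituting the former into \eqref{fssolM_tilde} gives, for $x>0$,
\[
q(x,t)=\eul^{-2\ii A^2t+2\ii F_\infty(-A)}\bigl(A-2\ii\mu(x)\bigr)+\ord(\eul^{-ct}),
\]
and the algebraic simplification $A-2\ii\mu(x)=A\,\frac{2A-\ii d(A)\eul^{-2Ax}}{2A+\ii d(A)\eul^{-2Ax}}$, combined with $2\ii F_\infty(-A)=-2\Im F_\infty(-A)+2\ii\Re F_\infty(-A)$, reproduces the first line of \eqref{fassoltrans}. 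For $x<0$ I would feed the conjugate of the second limit into \eqref{fssol1M_tilde}; since $x$ is real, conjugating $\mu$ amounts to replacing $d(A)$ by $\overline{d(A)}$ and $\ii$ by $-\ii$, and the analogous simplification gives the second line of \eqref{fassoltrans}.

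The expansions and the fraction algebra are routine; the step needing genuine care---and which I expect to be the main obstacle---is the bookkeeping of the three excluded rays and the uniformity of the error. The value $x=0$ must be removed because there the two representation formulas meet and the contour $\hat\Gamma$ of Figure \ref{fas_cont_tr_1} degenerates. The rays $x=x'$ and $x=x''$ are exactly the real positive, respectively negative, zeros of the denominators $2A+\ii d(A)\eul^{-2Ax}$ and $2A\eul^{-2Ax}-\ii\overline{d(A)}$; at such a point the model profile $\mu(x)$ has a pole and, more to the point, the bound \eqref{fasMerr} on $M_1^\err$ blows up, so the exponential control of $M^\err$ is lost. The task is therefore to show that, on any compact subset of the rays $x=\const$ avoiding $\{0,x',x''\}$, the factor $2A+\ii d(A)\eul^{-2Ax}$ (and its conjugate counterpart) stays bounded away from $0$, so that \eqref{fasMerr} yields an error $\ord(\eul^{-ct})$ with $c>0$ independent of $x$; this is what promotes the pointwise identity to the uniform asymptotics claimed in \eqref{fassoltrans}.
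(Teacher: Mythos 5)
Your proposal is correct and follows essentially the same route as the paper: the paper likewise approximates $M^{(3)}$ by the model problem \eqref{fasM_tilde}--\eqref{fask=0(tilde)RH}, invokes the factorization \eqref{fasM^3trans} with the error bound \eqref{fasMerr}, and solves the model problem ``arguing as in Section \ref{fasonesol}'', which is exactly your computation with $d_0$ replaced by $d(A)$. Your explicit verifications --- the sign consistency of the $k=0$ conditions via $\gamma_+=\gamma_-$, $a_{20}=-a_{10}$ and $F_+(0,-A)F_-(0,-A)=\delta^{2}(0,-A)$, the large-$k$ expansion of $\C{E}_2(k)$, and the identification of $x^\prime$, $x^{\prime\prime}$ as the real zeros of the denominators where \eqref{fasMerr} degenerates --- are correct and simply fill in steps the paper leaves implicit.
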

%-------------------%

%-------------------%
%:rem 3.9
%-------------------%
\begin{remark}
As $x\to\pm\infty$, the main terms in \eqref{fassoltrans} match those  in \eqref{fassolcpr12}.
\end{remark}
%-------------------%

%-------------------%
%:rem 3.10
%-------------------%
\begin{remark}
The main term of the asymptotics in \eqref{fassoltrans} is continuous at $x=0$ only if $d(A)$ and $\Im F_{\infty}(-A)$ satisfies one of the two conditions:
\begin{itemize}
\item 
$\Im F_{\infty}(-A)=0$ and $|d(A)|=2A$ with $d(A)\neq 2\ii A$,
\item
$d(A)=-2\ii A$ (without condition on $\Im F_{\infty}(-A)$).
\end{itemize}
\end{remark}
%-------------------%
	
%---------------------------------------------------------%
%:s.A
%---------------------------------------------------------%
\appendix
\section{Proof of Proposition \ref{fasbgiv}} \label{appendix}
%---------------------------------------------------------%
%:s.A.1
%---------------------------------------------------------%
\begin{proof}[Proof of item (ii)]
Substituting $q_{0,R}(x)$ with $R=0$ (see \ref{fasstepiv}) to \eqref{fasPsi}, we obtain that $\Psi_j(0,0,k)=\C{E}_j(k)$, $j=1,2$.
Using \eqref{fassrpsi}, we have $S(k)=\C{E}_2^{-1}(k)\C{E}_1(k)$, which implies \eqref{fasR=0} in view of \eqref{fsscatt}.
\renewcommand{\qed}{}
\end{proof}
%---------------------------------------------------------%
%:s.A.2
%---------------------------------------------------------%
\begin{proof}[Proof of item (i)]
For the initial data $q_{0,R}(x)$ with $R>0$, from the integral representations \eqref{fasPsi} we have that
\begin{equation}\label{faspsi_2R}
\Psi_2(R,0,k)=\C{E}_2(k)
\end{equation}
and that the $(11)$ and 
$(12)$ entries of $\Psi_1(x,0,k)$ for  $x\in[-R,R]$ satisfy the following integral equations:
\begin{subequations}\label{faspsi11}
\begin{align}
\label{faspsi11a}
&(\Psi_{1})_{11}(x,0,k)=
e_1(k)+2Ae_1(k)e_2(k)\int_{-R}^x\left(1-\eul^{2\ii f(k)(x-y)}\right)
(\Psi_{1})_{11}(y,0,k)\,\dd y, &&x\in[-R,R],\\
\label{faspsi11b}
&(\Psi_{1})_{12}(x,0,k)=
-e_2(k)-2Ae_1(k)e_2(k)\int_{-R}^x\left(1-\eul^{-2\ii f(k)(x-y)}\right)
(\Psi_{1})_{12}(y,0,k)\,\dd y, &&x\in[-R,R],
\end{align}
\end{subequations}
where 
\begin{equation}
e_1(k)\coloneqq\frac{1}{2}\left(w(k)+\frac{1}{w(k)}\right),\quad
e_2(k)\coloneqq\frac{\ii}{2}\left(w(k)-\frac{1}{w(k)}\right),
\end{equation}
with $w(k)$  given in \eqref{fasK}.
The entries $(\Psi_1)_{21}(x,0,k)$ and $(\Psi_1)_{22}(x,0,k)$ can be expressed in terms of $(\Psi_1)_{11}(x,0,k)$ and $(\Psi_1)_{12}(x,0,k)$ as follows:
\begin{subequations}\label{faspsi21}
\begin{align}
\label{faspsi21a}
&(\Psi_{1})_{21}(x,0,k)=
e_2(k)
+2A\int_{-R}^x\left(e_2^2(k)+e_1^2(k)\eul^{2\ii f(k)(x-y)}\right)
(\Psi_{1})_{11}(y,0,k)\,\dd y,&&x\in[-R,R],\\
\label{faspsi22b}
&(\Psi_{1})_{22}(x,0,k)=
e_1(k)
+2A\int_{-R}^x\left(e_1^2(k)
+e_2^2(k)\eul^{-2\ii f(k)(x-y)}\right)
(\Psi_{1})_{12}(y,0,k)\,\dd y,&&x\in[-R,R].
\end{align}
\end{subequations}
In order to find $\Psi_1(R,0,k)$, we first solve the integral equations \eqref{faspsi11} and then substitute the solutions into \eqref{faspsi21} with $x=R$.
Using the equality $e_1(k)e_2(k)=-\frac{\ii A}{2f(k)}$, equation \eqref{faspsi11a} can be reduced to the following Cauchy problem for a linear ordinary differential equation:
\begin{equation}
\label{faspsi11adiff}
\begin{cases}
\frac{\dd^2}{\dd x^2}(\Psi_1)_{11}(x,0,k)-2\ii f(k)\frac{\dd}{\dd x}(\Psi_1)_{11}(x,0,k)+2A^2(\Psi_1)_{11}(x,0,k)=0,&x\in[-R,R],\\
(\Psi_1)_{11}(-R,0,k)=e_1(k),\quad\frac{\dd}{\dd x}(\Psi_1)_{11}(-R,0,k)=0.&
\end{cases}
\end{equation}
The solution of \eqref{faspsi11adiff} has the form
\begin{equation}\label{faspsi11asol}
(\Psi_1)_{11}(x,0,k)=
\frac{\ii e_1(k)\lambda_2(k)}{2h(k)}
\eul^{\lambda_1(k)(x+R)}-
\frac{\ii e_1(k)\lambda_1(k)}{2h(k)}
\eul^{\lambda_2(k)(x+R)},\quad x\in[-R,R],
\end{equation}
where $h(k)$, $\lambda_j(k)$, $j=1,2$ are given by \eqref{fash} and \eqref{faslambda} respectively. Then, substituting \eqref{faspsi11asol} into \eqref{faspsi21a} and using the relations $\frac{\lambda_1(k)}{\lambda_2(k)}=-\frac{f(k)h(k)+k^2}{A^2}$, $\frac{\lambda_2(k)}{\lambda_1(k)}=\frac{f(k)h(k)-k^2}{A^2}$ and $\frac{e_1^2(k)}{e_2^2(k)}=-\frac{(k+f(k))^2}{A^2}$, we obtain:
\begin{align}
(\Psi_1)_{21}(R,0,k)&=e_2(k)
+\ii A\frac{e_1(k)e_2^2(k)}{h(k)}
\left(
\frac{\lambda_2(k)}{\lambda_1(k)}
\left(\eul^{2\lambda_1(k)R}-1\right)
-\frac{\lambda_1(k)}{\lambda_2(k)}
\left(\eul^{2\lambda_2(k)R}-1\right)
\right)\notag\\
&\qquad+\ii A\frac{e_1^3(k)}{h(k)}
\left(
\eul^{2\lambda_2(k)R}-\eul^{2\lambda_1(k)R}\right)\notag\\
&=\frac{A^2e_2(k)}{2f(k)h(k)}
\left(
\eul^{2\lambda_1(k)R}
\left(\frac{\lambda_2(k)}{\lambda_1(k)}
-\frac{e_1^2(k)}{e_2^2(k)}\right)
-\eul^{2\lambda_2(k)R}
\left(\frac{\lambda_1(k)}{\lambda_2(k)}
-\frac{e_1^2(k)}{e_2^2(k)}\right)
\right)\notag\\
&=\frac{e_2(k)}{2h(k)}
\left(
\eul^{2\lambda_1(k)R}(2k-\ii\lambda_1(k))
-\eul^{2\lambda_2(k)R}(2k-\ii\lambda_2(k))
\right).
\end{align}

Similarly, from the integral equation \eqref{faspsi11b} we deduce that
\begin{equation}\label{faspsi12asol}
(\Psi_1)_{12}(x,0,k)=
\frac{\ii e_2(k)\lambda_1(k)}{2h(k)}
\eul^{-\lambda_2(k)(x+R)}-
\frac{\ii e_2(k)\lambda_2(k)}{2h(k)}
\eul^{-\lambda_1(k)(x+R)},\quad x\in[-R,R],
\end{equation}
and, consequently, from \eqref{faspsi22b} we have (here we use
$\frac{e_2^2(k)}{e_1^2(k)}=-\frac{(f(k)-k)^2}{A^2}$)
\begin{align}
(\Psi_1)_{22}(R,0,k)&=e_1(k)
+\ii A\frac{e_1^2(k)e_2(k)}{h(k)}
\left(
\frac{\lambda_2(k)}{\lambda_1(k)}
\left(\eul^{-2\lambda_1(k)R}-1\right)
-\frac{\lambda_1(k)}{\lambda_2(k)}
\left(\eul^{-2\lambda_2(k)R}-1\right)
\right)\notag\\
&\qquad+\ii A\frac{e_2^3(k)}{h(k)}
\left(
\eul^{-2\lambda_2(k)R}-\eul^{-2\lambda_1(k)R}
\right)\notag\\
&=\frac{A^2e_1(k)}{2f(k)h(k)}
\left(
\eul^{-2\lambda_1(k)R}
\left(\frac{\lambda_2(k)}{\lambda_1(k)}
-\frac{e_2^2(k)}{e_1^2(k)}\right)
-\eul^{-2\lambda_2(k)R}
\left(\frac{\lambda_1(k)}{\lambda_2(k)}
-\frac{e_2^2(k)}{e_1^2(k)}\right)
\right)\notag\\
\label{faspsi22asol}
&=\frac{e_1(k)}{2h(k)}
\left(
\eul^{-2\lambda_2(k)R}(2k+\ii\lambda_2(k))
-\eul^{-2\lambda_1(k)R}(2k+\ii\lambda_1(k))
\right).
\end{align}
Finally, substituting \eqref{faspsi_2R} and \eqref{faspsi11asol}--\eqref{faspsi22asol} into 
\begin{equation}\label{fasSstep}
S(k)=\eul^{\ii Rf(k)\sigma_3}\Psi_2^{-1}(R,0,k)\Psi_1(R,0,k)\eul^{-\ii Rf(k)\sigma_3}
\end{equation}
and using equalities $e_1^2(k)=\frac{f(k)+k}{2f(k)}$ and $e_2^2(k)=\frac{f(k)-k}{2f(k)}$, we arrive at \eqref{fasR>0}.
\renewcommand{\qed}{}
\end{proof}
%---------------------------------------------------------%
%:s.A.3
%---------------------------------------------------------%
\begin{proof}[Proof of item (iii)]
Let the entries of the $2\times2$ matrix $\hat{\Psi}_1(x,k)$ satisfy \eqref{faspsi11} and \eqref{faspsi21} for $x\in[R,-R]$ (recall that here $R<0$). Then from the integral representation for $\Psi_2(x,0,k)$, see \eqref{fasPsi}, we conclude that the entries of $\Psi_2(x,0,k)$ can be found via $\hat{\Psi}_1(x,k)$ as follows:
\begin{equation}\label{fasPsi_2ijR}
\begin{split}
&(\Psi_2)_{11}(x,0,k)=(\hat{\Psi}_1)_{11}(x,k),\quad\,\,\,\,
	(\Psi_2)_{12}(x,0,k)=-(\hat{\Psi}_1)_{12}(x,k),\\
&(\Psi_2)_{21}(x,0,k)=-(\hat{\Psi}_1)_{21}(x,k),\quad
	(\Psi_2)_{22}(x,0,k)=(\hat{\Psi}_1)_{22}(x,k).
\end{split}
\end{equation}
Therefore, using the expressions for the entries of the matrix $\hat\Psi_1(R,k)$ obtained in the proof of item (i), we obtain $\Psi_2(R,0,k)$. Since $\Psi_1(R,0,k)=\C{E}_1(k)$, from \eqref{fasSstep} and \eqref{fasPsi_2ijR} we have \eqref{fasR<0}.
\renewcommand{\qed}{}
\end{proof}
%---------------------------------------------------------%
%:bib
%---------------------------------------------------------%
\begin{bibdiv}
%---------------------------------------------------------%
\begin{biblist}
%---------------------------------------------------------%
\bib{ALM18}{article}{
   author={Ablowitz, Mark J.},
   author={Luo, Xu-Dan},
   author={Musslimani, Ziad H.},
   title={Inverse scattering transform for the nonlocal nonlinear
   Schr\"{o}dinger equation with nonzero boundary conditions},
   journal={J. Math. Phys.},
   volume={59},
   date={2018},
   number={1},
   pages={011501, 42},
}
\bib{AMP}{article}{
   author={Ablowitz, Mark J.},
   author={Musslimani, Ziad H.},
   title={Integrable nonlocal nonlinear Schr\"odinger equation},
   journal={Phys. Rev. Lett.},
   volume={110},
   date={2013},
   pages={064105},
}
\bib{BB}{article}{
   author={Bender, Carl M.},
   author={Boettcher, Stefan},
   title={Real spectra in non-Hermitian Hamiltonians having $\scr{PT}$
   symmetry},
   journal={Phys. Rev. Lett.},
   volume={80},
   date={1998},
   number={24},
   pages={5243--5246},
}
\bib{BHH}{article}{
   author={Bender, Carl M.},
   author={Holm, Darryl D.},
   author={Hook, Daniel W.},
   title={Complexified dynamical systems},
   journal={J. Phys. A},
   volume={40},
   date={2007},
   number={3},
   pages={F793--F804},
}	
\bib{B89}{article}{
   author={Bikbaev, R. F.},
   title={Diffraction in a defocusing nonlinear medium},
   language={Russian, with English summary},
   journal={Zap. Nauchn. Sem. Leningrad. Otdel. Mat. Inst. Steklov.
   (LOMI)},
   volume={179},
   date={1989},
   number={Mat. Vopr. Teor. Rasprostr. Voln. 19},
   pages={13, 23--31, 187},
   translation={
      journal={J. Soviet Math.},
      volume={57},
      date={1991},
      number={3},
      pages={3078--3083},
   },
}	
\bib{BK14}{article}{
   author={Biondini, Gino},
   author={Kova\v{c}i\v{c}, Gregor},
   title={Inverse scattering transform for the focusing nonlinear
   Schr\"{o}dinger equation with nonzero boundary conditions},
   journal={J. Math. Phys.},
   volume={55},
   date={2014},
   number={3},
   pages={031506, 22},
}
\bib{BM17}{article}{
   author={Biondini, Gino},
   author={Mantzavinos, Dionyssios},
   title={Long-time asymptotics for the focusing nonlinear Schr\"odinger
   equation with nonzero boundary conditions at infinity and asymptotic
   stage of modulational instability},
   journal={Comm. Pure Appl. Math.},
   volume={70},
   date={2017},
   number={12},
   pages={2300--2365},
}
\bib{BKS11}{article}{
   author={Boutet de Monvel, Anne},
   author={Kotlyarov, Vladimir P.},
   author={Shepelsky, Dmitry},
   title={Focusing NLS equation: long-time dynamics of step-like initial
   data},
   journal={Int. Math. Res. Not. IMRN},
   date={2011},
   number={7},
   pages={1613--1653},
}
\bib{CJ16}{article}{
   author={Cuccagna, Scipio},
   author={Jenkins, Robert},
   title={On the asymptotic stability of $N$-soliton solutions of the
   defocusing nonlinear Schr\"{o}dinger equation},
   journal={Commun. Math. Phys.},
   volume={343},
   date={2016},
   number={3},
   pages={921--969},
}
\bib{DIZ}{article}{
   author={Deift, P. A.},
   author={Its, A. R.},
   author={Zhou, X.},
   title={Long-time asymptotics for integrable nonlinear wave equations},
   conference={
      title={Important developments in soliton theory},
   },
   book={
      series={Springer Ser. Nonlinear Dynam.},
      publisher={Springer, Berlin},
   },
   date={1993},
   pages={181--204},
}
\bib{DVZ94}{article}{
   author={Deift, P.},
   author={Venakides, S.},
   author={Zhou, X.},
   title={The collisionless shock region for the long-time behavior of
   solutions of the KdV equation},
   journal={Comm. Pure Appl. Math.},
   volume={47},
   date={1994},
   number={2},
   pages={199--206},
}
\bib{DVZ97}{article}{
   author={Deift, P.},
   author={Venakides, S.},
   author={Zhou, X.},
   title={New results in small dispersion KdV by an extension of the
   steepest descent method for Riemann--Hilbert problems},
   journal={Internat. Math. Res. Notices},
   date={1997},
   number={6},
   pages={286--299},
}
\bib{DZ}{article}{
   author={Deift, P.},
   author={Zhou, X.},
   title={A steepest descent method for oscillatory Riemann--Hilbert
   problems. Asymptotics for the MKdV equation},
   journal={Ann. of Math. (2)},
   volume={137},
   date={1993},
   number={2},
   pages={295--368},
}
\bib{DPMV13}{article}{
   author={Demontis, F.},
   author={Prinari, B.},
   author={van der Mee, C.},
   author={Vitale, F.},
   title={The inverse scattering transform for the defocusing nonlinear
   Schr\"{o}dinger equations with nonzero boundary conditions},
   journal={Stud. Appl. Math.},
   volume={131},
   date={2013},
   number={1},
   pages={1--40},
}
\bib{EGGK}{article}{
   author={\`El\cprime, G. A.},
   author={Geogjaev, V. V.},
   author={Gurevich, A. V.},
   author={Krylov, A. L.},
   title={Decay of an initial discontinuity in the defocusing NLS
   hydrodynamics},
   note={The nonlinear Schr\"{o}dinger equation (Chernogolovka, 1994)},
   journal={Phys. D},
   volume={87},
   date={1995},
   number={1-4},
   pages={186--192},
}	
\bib{EMK18}{article}{
   author={El-Ganainy, R.},
   author={Makris, K. G.},
   author={Khajavikhan, M.},
   author={Musslimani, Ziad H.},
   author={Rotter, S.},
   author={Christodoulides, D. N.},
   title={Non-Hermitian physics and PT symmetry},
   journal={Nature Physics},
   volume={14},
   date={2018},
   pages={11--19},
}	
\bib{AMFL}{article}{
   author={Feng, Bao-Feng},
   author={Luo, Xu-Dan},
   author={Ablowitz, Mark J.},
   author={Musslimani, Ziad H.},
   title={General soliton solution to a nonlocal nonlinear Schr\"{o}dinger
   equation with zero and nonzero boundary conditions},
   journal={Nonlinearity},
   volume={31},
   date={2018},
   number={12},
   pages={5385--5409},
}
\bib{FLQ}{article}{
   author={Fromm, Samuel},   
   author={Lenells, Jonatan},
   author={Quirchmayr, Ronald},
   title={The defocusing nonlinear Schr\"odinger equation with step-like 
   oscillatory initial data},
   date={2011},
   eprint={https://arxiv.org/abs/2104.03714},
}			
\bib{GA}{article}{
   author={Gadzhimuradov, T. A.},
   author={Agalarov, A. M.},
   title={Towards a gauge-equivalent magnetic structure of the nonlocal 
   nonlinear Schr\"odinger equation},
   journal={Phys. Rev. A},
   volume={93},
   date={2016},
   number={6},
   pages={062124},
}			
\bib{G66}{book}{
   author={Gakhov, F. D.},
   title={Boundary value problems},
   note={Translated from the Russian;
   Reprint of the 1966 translation},
   publisher={Dover Publications, Inc., New York},
   date={1990},
}	
\bib{GP}{article}{
   author={G\"{u}rses, Metin},
   author={Pekcan, Asl\i},
   title={Nonlocal nonlinear Schr\"{o}dinger equations and their soliton
   solutions},
   journal={J. Math. Phys.},
   volume={59},
   date={2018},
   number={5},
   pages={051501, 17},
}		
\bib{HL16}{article}{
   author={Huang, Xin},
   author={Ling, Liming},
   title={Soliton solutions for the nonlocal nonlinear Schr\"{o}dinger 
   equation},
   journal={Eur. Phys. J. Plus},
   volume={131},
   date={2016},
   number={5},
   pages={148},
}		
\bib{IU86}{article}{
   author={Its, A. R.},
   author={Ustinov, A. F.},
   title={Time asymptotics of the solution of the Cauchy problem for the
   nonlinear Schr\"{o}dinger equation with boundary conditions of finite
   density type},
   language={Russian},
   journal={Dokl. Akad. Nauk SSSR},
   volume={291},
   date={1986},
   number={1},
   pages={91--95},
}
\bib{IU88}{article}{
   author={Its, A. R.},
   author={Ustinov, A. F.},
   title={Formulation of the scattering theory for the NLS equation with
   boundary conditions of finite density type in a soliton-free sector},
   language={Russian, with English summary},
   journal={Zap. Nauchn. Sem. Leningrad. Otdel. Mat. Inst. Steklov.
   (LOMI)},
   volume={169},
   date={1988},
   number={Voprosy Kvant. Teor. Polya i Statist. Fiz. 8},
   pages={60--67, 186--187},
   translation={
      journal={J. Soviet Math.},
      volume={54},
      date={1991},
      number={3},
      pages={900--905},
   },
}
\bib{J15}{article}{
   author={Jenkins, Robert},
   title={Regularization of a sharp shock by the defocusing nonlinear
   Schr\"{o}dinger equation},
   journal={Nonlinearity},
   volume={28},
   date={2015},
   number={7},
   pages={2131--2180},
}
\bib{LX15}{article}{
   author={Li, M.},
   author={Xu, T.},
   title={Dark and antidark soliton interactions in the nonlocal
   nonlinear Schr\"odinger equation with the self-induced 
   parity-time-symmetric potential},
   journal={Phys. Rev. E},
   volume={91},
   date={2015},
   pages={033202},
}		
\bib{Lou18}{article}{
   author={Lou, S. Y.},
   title={Alice-Bob systems, $\hat P$-$\hat T$-$\hat C$ symmetry invariant
   and symmetry breaking soliton solutions},
   journal={J. Math. Phys.},
   volume={59},
   date={2018},
   number={8},
   pages={083507, 20},
}	
\bib{LH17}{article}{
   author={Lou, S. Y.},
   author={Huang, F.},
   title={Alice-Bob physics, coherent solutions of nonlocal KdV systems},
   journal={Sci. Rep.},
   volume={7},
   date={2017},
   pages={869},
}	
\bib{MS20}{article}{
   author={Matveev, V. B.},
   author={Smirnov, A. O.},
   title={Multiphase solutions of nonlocal symmetric reductions of equations
   of the AKNS hierarchy: General analysis and simplest examples},
   language={Russian},
   journal={Teoret. Mat. Fiz.},
   volume={204},
   date={2020},
   number={3},
   pages={383--395},
}
\bib{MS18}{article}{
   author={Michor, J.},
   author={Sakhnovich, A. L.},
   title={GBDT and algebro-geometric approaches to explicit solutions and
   wave functions for nonlocal NLS},
   journal={J. Phys. A},
   volume={52},
   date={2019},
   number={2},
   pages={025201, 24},
}	
\bib{R21}{article}{
   author={Russo, Matthew},
   title={Local and nonlocal solitons in a coupled real system of
   Landau-Lifshitz equations},
   journal={Phys. D},
   volume={422},
   date={2021},
   pages={Paper No. 132893, 13},
}	
\bib{RS21PD}{article}{
   author={Rybalko, Yan},
   author={Shepelsky, Dmitry},
   title={Asymptotic stage of modulation instability for the nonlocal
   nonlinear Schr\"{o}dinger equation},
   journal={Phys. D},
   volume={428},
   date={2021},
   pages={Paper No. 133060, 30},
}
\bib{RSs}{article}{
   author={Rybalko, Yan},
   author={Shepelsky, Dmitry},
   title={Long-time asymptotics for the nonlocal nonlinear Schr\"{o}dinger
   equation with step-like initial data},
   journal={J. Differential Equations},
   volume={270},
   date={2021},
   pages={694--724},
}
\bib{RS21CIMP}{article}{
   author={Rybalko, Yan},
   author={Shepelsky, Dmitry},
   title={Long-time asymptotics for the integrable nonlocal focusing
   nonlinear Schr\"{o}dinger equation for a family of step-like initial data},
   journal={Commun. Math. Phys.},
   volume={382},
   date={2021},
   number={1},
   pages={87--121},
}
\bib{San18}{article}{
   author={Santini, P. M.},
   title={The periodic Cauchy problem for PT-symmetric NLS, I: the first
   appearance of rogue waves, regular behavior or blow up at finite times},
   journal={J. Phys. A},
   volume={51},
   date={2018},
   number={49},
   pages={495207, 21},
}	
\bib{SMMC}{article}{
   author={Sarma, A.},
   author={Miri, M.},
   author={Musslimani, Z.},
   author={Christodoulides, D.},
   title={Continuous and discrete Schr\"odinger systems with parity-time-symmetric 
   nonlinearities},
   journal={Phys. Rev. E},
   volume={89},
   date={2014},
   pages={052918},
}		
\bib{V02}{article}{
   author={Vartanian, A. H.},
   title={Long-time asymptotics of solutions to the Cauchy problem for the
   defocusing nonlinear Schr\"{o}dinger equation with finite-density initial
   data. II. Dark solitons on continua},
   journal={Math. Phys. Anal. Geom.},
   volume={5},
   date={2002},
   number={4},
   pages={319--413},
}
\bib{XCLM19}{article}{
   author={Xu, Tao},
   author={Chen, Yang},
   author={Li, Min},
   author={Meng, De-Xin},
   title={General stationary solutions of the nonlocal nonlinear Schr\"{o}dinger
   equation and their relevance to the $\C{PT}$-symmetric system},
   journal={Chaos},
   volume={29},
   date={2019},
   number={12},
   pages={123124, 12},
}	
\bib{YY20}{article}{
   author={Yang, Bo},
   author={Yang, Jianke},
   title={On general rogue waves in the parity-time-symmetric nonlinear
   Schr\"{o}dinger equation},
   journal={J. Math. Anal. Appl.},
   volume={487},
   date={2020},
   number={2},
   pages={124023, 23},
}
\bib{ZS73}{article}{
   author={Zakharov, V. E.},
   author={Shabat, A. B.},
   title={Interaction between solitons in a stable medium},
   journal={Soviet Physics JETP},
   volume={37},
   date={1973},
   pages={823--828},
}
%---------------------------------------------------------%
\end{biblist}
%---------------------------------------------------------%
\end{bibdiv}
%---------------------------------------------------------%
\end{document}